\documentclass[english,10pt]{amsart}
\usepackage[francais]{babel}
\usepackage{amsmath,amsfonts,amssymb,multicol, amsthm}
\usepackage{amsrefs}
\usepackage{mathrsfs}
\usepackage[utf8]{inputenc}
\usepackage{esint}
\usepackage{color, framed}

\usepackage{graphicx}
\setcounter{MaxMatrixCols}{30}

\usepackage[normalem]{ulem}

\newcommand{\R}{\ensuremath{\mathbb{R}}}

\newcommand{\N}{\ensuremath{\mathbb{N}}}

\renewcommand{\leq}{\leqslant}
\renewcommand{\geq}{\geqslant}

\newcommand{\eps}{\epsilon}

\newtheorem{theorem}{Theorem}
\newtheorem{definition}{Definition}
\newtheorem{thmx}{Theorem}

\newtheorem{lemma}[theorem]{Lemma}
\newtheorem{proposition}[theorem]{Proposition}

\theoremstyle{remark}
\newtheorem{remark}[theorem]{Remark}

\newcommand{\super}{\overline}

\setlength{\textwidth}{160mm} \setlength{\textheight}{230mm}
\setlength{\oddsidemargin}{0mm}
\addtolength{\topmargin}{-25mm}

\newcounter{numeroexo}

\title{Regularity and symmetry  for semilinear elliptic equations in bounded domains.}
\author{Louis Dupaigne}
\address{Institut Camille Jordan, UMR CNRS 5208, Universit\'e Claude Bernard Lyon 1, 43 boulevard du 11 novembre 1918, 69622 Villeurbanne cedex, France}
\email{dupaigne@math.univ-lyon1.fr}
\author{Alberto Farina}
\address{LAMFA, UMR CNRS 7352, Universit\'e Picardie Jules Verne 33, rue St Leu, 80039 Amiens, France}
\email{alberto.farina@u-picardie.fr}
\date{}

\usepackage[T1]{fontenc}
\begin{document}

\renewcommand{\appendixname}{Appendix}
\renewcommand\refname{References}

\maketitle

\begin{abstract}
In the present paper, we investigate the regularity and symmetry properties of weak solutions to semilinear elliptic equations which are locally stable. 
\end{abstract}

\section{Introduction and main results} 
In the present paper, we investigate the regularity and symmetry properties of weak solutions to semilinear elliptic equations.  We shall focus on the following class:
\begin{definition}\label{loc stab} Let  $N\ge1$, $\Omega\subset\R^N$ denote an open set and $f\in C^1(\R)$.  Assume that $u\in H^1_{loc}(\Omega)$, $f(u)\in L^1_{loc}(\Omega)$ and that $u$ solves 
\begin{equation}\label{equazione}
-\Delta u = f(u) \qquad\text{in $\mathcal D'(\Omega)$}.
\end{equation}
We say that $u$ is \textbf{locally stable} in $\Omega$ if $f'(u)\in L^1_{loc}(\Omega)$ and if for every $x\in \Omega$, there exists an open neighborhood $\omega\subset\Omega$ of $x$ such that for every $\varphi\in C^1_c(\omega)$, there holds
\begin{equation}\label{stab}
\int_{\omega}f'(u)\varphi^2 \le \int_{\omega}\vert\nabla\varphi\vert^2.
\end{equation}
A solution is stable in $\Omega$ if the above inequality holds for $\omega=\Omega$ and for every $\varphi\in C^1_c(\Omega)$.
\end{definition}


\medskip

As shown by the following examples, the class of locally stable solutions is natural and wide enough to encompass various interesting families of solutions (naturally) arising in the study of PDEs. 

\medskip

\begin{enumerate}
	\item
	Smooth solutions are locally stable, thanks to the (sharp) Poincaré inequality. 
	\item More generally, for $N\ge3$, weak solutions such that $f'(u) \in L^1_{loc}(\Omega)$ and $f'(u)^+\in L^{N/2}_{loc}(\Omega)$ are locally stable. 
	Indeed, choosing $\omega$ so small that $ \Vert f'(u)^+\Vert_{L^{\frac N2}(\omega)}\le \frac{N(N-2)\vert B_1\vert}4$ and applying Hölder's and Sobolev's inequalities, we have
	$$
	\int_{\omega}f'(u)\varphi^2\le \int_{\omega}f'(u)^+\varphi^2\le \Vert f'(u)^+\Vert_{L^{\frac N2}(\omega)} \Vert \varphi\Vert^2_{L^{\frac{2N}{N-2}}(\omega)}\le \int_{\omega}\vert\nabla\varphi\vert^2
	$$
	When $N=2,$ the local stability follows from Moser-Trudinger inequality if $f'(u) \in L^1_{loc}(\Omega)$ and $f'(u)^+\in L^{p}_{loc}(\Omega)$ for some $p>1$.
	\item If $N \geq 3,$ $f(u)=2(N-2)e^u$ and $u=-2\ln\vert x\vert$, then $f'(u)=\frac{2(N-2)}{\vert x\vert^2} \in L^1_{loc}$ but just fails to belong to $L^{N/2}$ near the origin. By the optimality of Hardy's inequality, $u$ is never locally stable in any open set containing the origin whenever $3\le N\le9$.
	\item Local minimizers are stable: $u\in H^1_{loc}(\Omega)$ is a local minimizer if for any $ \Omega' \subset \subset \Omega$ and for all $\varphi\in C^1_c(\Omega')$, $t=0$ is a point of minimum of the function $t\mapsto e(t):=\mathcal E_{\Omega'}(u+t\varphi)$, where
	$\mathcal E_{\Omega'}(v)=\int_{\Omega'}\left(\frac12\vert\nabla v\vert^2-F(v)\right)$ and $F'=f$. Therefore \eqref{stab} holds (since $e''(0)\ge0)$. 
	\item If $u\in H^1_{loc}(\Omega)$ has finite Morse index\footnote{We recall that a solution $u$ to \eqref{equazione} has Morse index equal to $K \ge 0$, if $f'(u)\in L^1_{loc}(\Omega)$ and $K$ is the maximal dimension of a subspace $X_K$ of $ C^1_c(\Omega) $ such that $ \int_{\Omega}\vert \nabla \psi \vert^2 < \int_{\Omega} f'(u) \psi^2$ for any $\psi \in X_K \setminus \{0\}$. In particular $u$ is stable if and only if its Morse index is zero. 
	} in $\Omega$, then $u$ is locally stable in $\Omega$, see Proposition 1.5.1 in \cite{dup} (or Proposition 2.1 in \cite{ddf}). { In addition, $u$ is stable outside a compact set, see Remark 1 in \cite{farina}.} But there are also locally stable solutions of infinite Morse index. This is the case e.g. when $\Omega$ is the punctured unit ball, $f(u)=2(N-2)e^u$, $u(x)=-2\ln\vert x\vert$ and $3\le N\le 9$.
\end{enumerate}

\medskip

Our first result concerns the complete classification of nonnegative stable solutions $u \in H^1_0(\Omega)$ to \eqref{equazione}, when $f$ is a convex function satisfying $f(0)=0$. 

\medskip

\begin{theorem}\label{th:stable:f(0)=0}
	Let $\Omega$ be a bounded domain of $\R^N$, $N \geq 1,$ let $f \in C^1(\R)$ be a convex function such that $f(0)=0$ and let $\lambda_1$ be the principal eigenvalue of $ -\Delta$ with homogeneous Dirichlet boundary conditions.  Assume that $u\in H^1_0(\Omega)$, $f(u) \in L^1_{loc}(\Omega)$ and that $u$ is a stable solution to
	\begin{equation}\label{eq:Lemma0}
	\left\{
	\begin{aligned}
	-\Delta u& = f(u) \qquad\text{in $\mathcal D'(\Omega)$}\\
	u&\geq 0 \qquad\text{a.e. on $\Omega$}.
	\end{aligned}
	\right.
	\end{equation} 
	Then, either $u \equiv 0$ or $f(t) = \lambda_1 t $ on $(0, \sup_\Omega u)$ and $u \in C^\infty(\Omega) \cap H^1_0(\Omega)$ is a positive first eigenfunction of $ -\Delta$ with homogeneous Dirichlet boundary conditions.   
\end{theorem}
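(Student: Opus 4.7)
The strategy is to test both the stability inequality and the equation against $u$ itself and to exploit convexity together with $f(0)=0$ to force a pointwise saturation a.e.\ on $\Omega$. The key identities I would establish are
\[
\int_\Omega |\nabla u|^2 = \int_\Omega u f(u), \qquad \int_\Omega f'(u)\, u^2 \leq \int_\Omega |\nabla u|^2,
\]
the first being the Dirichlet energy identity and the second being \eqref{stab} applied with $\varphi = u$. Both extend formulas a priori valid only for $C^1_c(\Omega)$ test functions. For the stability extension, I would approximate $u$ by $\varphi_n \in C^1_c(\Omega)$ with $\varphi_n \to u$ in $H^1_0(\Omega)$ (and a.e.\ along a subsequence), then decompose $f'(u) = (f'(u) - f'(0)) + f'(0)$; since $f'$ is nondecreasing and $u \geq 0$, the first summand is nonnegative and one applies Fatou's lemma, while the constant term passes to the limit by $L^2$-convergence. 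The energy identity is obtained via a similar cutoff argument, using the lower bound $f(t) \geq f'(0) t$ on $[0,\infty)$ (a consequence of convexity and $f(0)=0$) to control $u f(u)$ from below.

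Subtracting the two relations yields $\int_\Omega u\bigl(u f'(u) - f(u)\bigr) \leq 0$. The subtangent inequality $f(0) \geq f(u) + f'(u)(0-u)$, i.e.\ convexity of $f$ evaluated at the origin, gives $u f'(u) - f(u) \geq 0$ pointwise, so the integrand above is everywhere $\geq 0$ and must vanish a.e.\ on $\Omega$. Setting $h(t) := t f'(t) - f(t)$, one has $h(0) = 0$ and $h'(t) = t f''(t) \geq 0$ on $[0,\infty)$, so $h$ is nondecreasing there and $\{h = 0\}$ is an interval $[0, T]$ with $T > 0$ (otherwise $u \equiv 0$, which is the excluded alternative). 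Hence $u \leq T$ a.e.\ on $\Omega$, and on $(0, T]$ the vanishing of $h$ is equivalent to $(f(t)/t)' = h(t)/t^2 = 0$, so $f(t) = c t$ on $[0, T]$ for some constant $c \in \R$.

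Substituting back, $u$ is a nontrivial nonnegative $H^1_0(\Omega)$ solution of $-\Delta u = c u$; pairing against the positive first eigenfunction $\phi_1$ yields $(c - \lambda_1)\int_\Omega u \phi_1 = 0$, and since $\int_\Omega u \phi_1 > 0$ this forces $c = \lambda_1$. As $\sup_\Omega u \leq T$, we obtain $f(t) = \lambda_1 t$ on $(0, \sup_\Omega u)$, and elliptic regularity together with the strong maximum principle upgrade $u$ to a positive smooth first Dirichlet eigenfunction. The main difficulty is the first step---rigorously justifying the test-function manipulations when $f(u)$ is only $L^1_{\mathrm{loc}}(\Omega)$ and $f'(u)$ has no prescribed sign; the remainder is pure convex analysis combined with the Krein--Rutman characterization of $\lambda_1$.
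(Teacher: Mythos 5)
Your argument is correct, but it follows a genuinely different route from the paper. The paper treats this theorem as an immediate corollary of the general comparison result (Theorem \ref{theorem:compa-stable-solutions}) applied to the pair $v\equiv 0\le u$: there, after proving the energy identity and the extended stability inequality, one deduces $\bigl(f(u)-f(v)-f'(u)(u-v)\bigr)(u-v)=0$ a.e., invokes the strong maximum principle to get $u>v$ a.e., and then runs a delicate connectedness/intermediate-value argument (via \cite{C-Haraux}) to glue the affine pieces of $f$ over the varying intervals $(v(x),u(x))$ into a single affine formula on $(\inf v,\sup u)$. You specialize to $v=0$ from the start, which turns the saturation condition into $u\,h(u)=0$ with $h(t)=tf'(t)-f(t)$, and you replace the entire gluing step by the elementary observation that $h$ is nonnegative and nondecreasing on $[0,\infty)$ with $h(0)=0$, so its zero set is an interval $[0,T]$ containing all values of $u$; since every interval $(0,u(x))$ shares the left endpoint $0$, no maximum principle and no connectedness of $\Omega$ are needed. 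What you lose is generality (your trick does not prove Theorem \ref{theorem:compa-stable-solutions}, which the paper needs elsewhere, e.g.\ in Theorem \ref{Prop-approx-crucial}); what you gain is a self-contained and shorter proof of this particular statement. The analytic core is the same in both: extending the equation and the stability inequality to the test function $u\in H^1_0(\Omega)$, which you justify exactly as in Proposition \ref{prop:res-auxiliares} (Fatou for the nonnegative part $f'(u)-f'(0)$, the sandwich $f'(0)u^2\le uf(u)\le f'(u)u^2$ for integrability, then truncation); note that the stability extension must come \emph{first}, since the upper bound $uf(u)\le f'(u)u^2$ is what makes $uf(u)\in L^1(\Omega)$ and licenses the dominated-convergence step in the energy identity. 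One small repair: since $f$ is only $C^1$, you cannot write $h'(t)=tf''(t)$; instead derive the monotonicity of $h$ directly from convexity, e.g.\ $h(t)-h(s)\ge s\,(f'(t)-f'(s))\ge 0$ for $0\le s\le t$, using $f(t)-f(s)\le f'(t)(t-s)$.
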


\begin{remark}
	If $u \equiv0$, then necessarily $ f'(0) \leq \lambda_1,$ by Lemma \ref{lemma lambda1} in section \ref{S3} below. Also observe that for any $\alpha \leq \lambda_1$ there is a convex function $f$ satisfying $f(0)=0$, $f'(0)=\alpha $ and such that $u \equiv 0$ is a stable solution to \eqref{eq:Lemma0}. An example is provided by $f(u) = u^2 +\alpha u$. 
\end{remark}

\medskip

The latter result is a consequence of the following general theorem which holds true for any convex function $f$ of class $C^1$ and for distributional solutions merely in $H^1(\Omega)$.

\medskip

\begin{theorem} \label{theorem:compa-stable-solutions} Let $\Omega$ be a bounded domain of $\R^N$, $N \geq 1$ and let $f \in C^1([0;+\infty))$ be a convex function. Assume that $u,v \in H^1(\Omega)$ satisfy $ u-v \in H^1_0(\Omega),$ $0 \leq v \leq u $ a.e. on $\Omega,$ $f(u),f(v) \in L^1_{loc}(\Omega)$ and both $u$ and $v$ are solution to 
	\begin{equation}\label{eq-D1}
	-\Delta w = f(w)\qquad\text{in $\mathcal D'(\Omega)$}.
	\end{equation}
	If $f'(u) \in L^1_{loc}(\Omega)$ and $u$ is stable, then either $u\equiv v$ or $f(t) = a + \lambda_1 t$ for all $t \in (\inf_{\Omega} v, \, \sup_{\Omega} u)$ and some\footnote{Actually the real number $a$ is unique and its value is given by $- \lambda_1\frac{\int_\Omega \phi_1 h}{\int_\Omega \phi_1} \leq 0, $ where $\phi_1$ is a positive first eigenfunction of $ -\Delta$ with homogeneous Dirichlet boundary conditions and $h \in H^1(\Omega)$ is the unique weak solution of $ - \Delta h =0$ in $ \Omega$ with $ u-h \in H^1_0(\Omega)$ (to see this, use $\phi_1$ as test function in the weak formulation of $-\Delta u = a + \lambda_1 u$ and the fact that $h$ is harmonic) and also note that $h$ is nonnegative by the maximum principle. In particular, $u,v \in H^1_0(\Omega) \Longleftrightarrow a=0$. Also note that, for every $ a \leq 0$ there exist solutions $u,v$ for which the second alternative of the theorem occurs. Indeed, the functions $u_t := - \frac{a}{\lambda_1} + t \phi_1$, $ t \geq 0$ are suitable.}
	$a \in \R,$ $u,v \in C^\infty(\Omega)$ and $u-v$ is a positive first eigenfunction of $ -\Delta$ with homogeneous Dirichlet boundary conditions.   
\end{theorem}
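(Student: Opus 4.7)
Set $w := u - v \in H^1_0(\Omega)$; then $w \geq 0$ and, subtracting the two equations, $-\Delta w = f(u) - f(v)$ in $\mathcal{D}'(\Omega)$. The strategy is to squeeze $\int_\Omega |\nabla w|^2$ between $\int_\Omega (f(u)-f(v))\, w$ (by testing the PDE with $w$) and $\int_\Omega f'(u)\, w^2$ (by stability applied to $\varphi = w$), exploiting the convexity of $f$ to tie the two quantities together.

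The convexity of $f$ on $[0,+\infty)$ and $u \geq v \geq 0$ give the pointwise a.e. bounds
$$
f'(v)\, w \;\leq\; f(u) - f(v) \;\leq\; f'(u)\, w,
$$
and since $f'$ is nondecreasing one has $f'(u)^- \leq |f'(0)|$. A Fatou-type density argument extends the stability inequality from $C^1_c(\Omega)$ to all of $H^1_0(\Omega)$, yielding in particular $f'(u)^+ w^2 \in L^1(\Omega)$ and hence $(f(u)-f(v))\, w \in L^1(\Omega)$. Testing the distributional equation against the truncations $w_k := \min(w,k) \in H^1_0(\Omega) \cap L^\infty(\Omega)$ (approximated in $H^1_0$ by $C^\infty_c$-functions, then letting $k \to \infty$ by dominated convergence) produces
$$
\int_\Omega |\nabla w|^2 \;=\; \int_\Omega (f(u) - f(v))\, w \;\leq\; \int_\Omega f'(u)\, w^2,
$$
while the extended stability applied to $\varphi = w$ gives the reverse inequality. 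Consequently equality holds throughout, forcing $f(u) - f(v) = f'(u)\, w$ a.e.\ on $\{w > 0\}$; a short convexity argument then shows that $f$ is affine on the whole interval $[v(x), u(x)]$ with slope $f'(u(x)) = f'(v(x))$ for a.e.\ $x$ such that $u(x) > v(x)$.

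Assume now $w \not\equiv 0$ (otherwise $u \equiv v$). Then $w$ is a nontrivial, nonnegative distributional solution in $H^1_0(\Omega)$ of the linear equation $-\Delta w = f'(u)\, w$, whose potential is bounded below by $f'(0)$ and whose positive part enjoys Hardy-type integrability from stability. Moser iteration then gives local boundedness and continuity of $w$, and the strong minimum principle applied to $-\Delta + |f'(0)|\, \mathrm{Id}$ (for which $w$ is a nonnegative supersolution) yields $w > 0$ throughout $\Omega$. Bootstrapping the regularity of $u$ and $v$ via the local affinity of $f$ along each $[v(x), u(x)]$, the intermediate value theorem on the connected set $\Omega$ gives $\bigcup_{x \in \Omega}[v(x), u(x)] \supset (\inf_\Omega v, \sup_\Omega u)$; since $f \in C^1$ and is affine on each of these overlapping intervals, its slope must be constant on the union, so $f(t) = a + m t$ on $(\inf_\Omega v, \sup_\Omega u)$ for some $a, m \in \R$.

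The equation for $w$ then becomes $-\Delta w = m w$ in $\Omega$ with $w \in H^1_0(\Omega)$, $w > 0$, which forces $m = \lambda_1$ and makes $w$ a positive principal Dirichlet eigenfunction (only the first eigenvalue admits such an eigenfunction). Classical linear elliptic regularity applied to $-\Delta u = \lambda_1 u + a$ and $-\Delta v = \lambda_1 v + a$ finally upgrades $u, v \in H^1(\Omega)$ to $C^\infty(\Omega)$. The most delicate step I expect is the positivity $w > 0$ together with the continuity of $u$ and $v$: since no a priori regularity is assumed on $u$, one must first harness the stability inequality as a Hardy-type control on the potential $f'(u)^+$ to obtain the integrability needed to invoke Moser iteration and the strong maximum principle.
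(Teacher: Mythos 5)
Your overall architecture coincides with the paper's: extend the stability inequality to $H^1_0(\Omega)$, establish the energy identity $\int_\Omega\vert\nabla(u-v)\vert^2=\int_\Omega(f(u)-f(v))(u-v)$ via truncations, squeeze against $\int_\Omega f'(u)(u-v)^2$ using convexity, deduce $f(u)-f(v)=f'(u)(u-v)$ a.e., apply the strong maximum principle to get $u>v$ a.e.\ (the paper does this directly for $H^1$ supersolutions via Theorem 8.19 in \cite{GiTr}, so your detour through Moser iteration and continuity of $w$ is unnecessary), and finally identify the slope with $\lambda_1$. All of that is sound.

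The genuine gap is the passage from ``$f$ is affine on $(v(x),u(x))$ for a.e.\ $x$'' to ``$f$ is affine on all of $(\inf_\Omega v,\sup_\Omega u)$''. You justify it by ``bootstrapping the regularity of $u$ and $v$ via the local affinity of $f$'' and then invoking the intermediate value theorem on the connected set $\Omega$, asserting that the intervals $[v(x),u(x)]$ overlap so the slope is constant on their union. This does not work as stated: (i) the regularity bootstrap is circular, since knowing only that $f$ is affine on each interval $[v(x),u(x)]$ with possibly different slopes gives no global growth control on $f(u)$ (the range of $u$ may be unbounded), so you cannot yet upgrade $u,v$ beyond $H^1$ and hence cannot apply a classical IVT; and (ii) even granting a covering $\bigcup_x[v(x),u(x)]\supset(\inf_\Omega v,\sup_\Omega u)$, a covering of an interval by subintervals on each of which $f$ is affine does not force a single slope unless consecutive intervals overlap on sets with nonempty interior, which you have not shown. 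This is exactly where the paper's proof does its real work: it exhausts $\Omega$ by connected open sets $\omega_m\subset\subset\Omega$, uses the strong maximum principle to get a quantitative bound $u-v\geq c(m)>0$ on $\omega_m$, and invokes the intermediate value theorem for Sobolev functions of Chabi--Haraux \cite{C-Haraux} to produce, near any candidate endpoint $\alpha$ of the maximal interval of affinity, a point $x_\epsilon$ with $v(x_\epsilon)<\alpha$ and $u(x_\epsilon)\geq\alpha+\tfrac{c(m)}{2}$, thereby extending the interval past $\alpha$ and reaching a contradiction (with separate treatment of the cases where $u$ or $v$ is constant). Without some substitute for this mechanism, your argument does not establish the affinity of $f$ on the full range, which is the heart of the second alternative of the theorem.
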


\begin{remark}According to Theorem 1.3 and Corollary 3.7 in \cite{caz}, there exists a $C^1$, positive, increasing but non-convex nonlinearity $f$ with two distinct and ordered (classical) stable\footnote{Indeed, applying Corollary 3.7 in \cite{caz}, we see that in the notations of that corollary, for $f(u)=\underline\lambda\overline g(u)$, $\overline u_\lambda$ is minimal hence stable. In addition, by minimality, $\lambda\mapsto \overline u_\lambda$ is nondecreasing and so $\overline u_\lambda$ converges to a stable solution $v$ as $\lambda\searrow\underline\lambda$ such that $v\ge \overline u_{\underline\lambda}$. Then, take $u= \overline u_{\underline\lambda}$} solutions $0\le u\le v$. In other words, the convexity assumption cannot be completely removed from the above theorem.
\end{remark}

\medskip

Another important consequence of Theorem \ref{theorem:compa-stable-solutions} is the following approximation result which, in turn, motivated our definition of local stability (see Definition \ref{loc stab}). This result will be central in the proof of our main regularity results for locally stable solutions to \eqref{equazione}.  

\medskip

\begin{theorem}\label{Prop-approx-crucial} 
	Assume $ \alpha \in (0,1)$ and $N \geq 1$. 
	
	Let $\Omega$ be a bounded domain of $\R^N$ and let $f\in C^1([0,+\infty))$ be a convex function such that $f(0)\geq 0$. Assume that $u \in H^1(\Omega),$ $f(u) \in L^1_{loc}(\Omega)$ and that $u$ is a stable solution to 
	\begin{equation}\label{equazione1} 
	\left\{
	\begin{aligned}
	-\Delta u& = f(u) \qquad\text{in $\mathcal D'(\Omega)$}\\
	u&\geq 0 \qquad\text{a.e. on $\Omega$}.
	\end{aligned}
	\right.
	\end{equation} 
	Then, there is a nondecreasing sequence $(f_k)$ of convex functions in $C^1([0,+\infty)) \cap C^{0,1}([0,+\infty))$ such that $ f_k \nearrow f$ pointwise in $[0; +\infty)$ and a nondecreasing sequence $(u_k)$ of functions in $H^1(\Omega) \cap C^2(\Omega)$ such that $u_k$ is a stable weak solution\footnote{That is a function $u_k$ satisfying $\int_\Omega \nabla u_k \nabla \varphi = \int_\Omega f_k(u_k) \varphi$, for all $ \varphi \in H^1_0(\Omega)$. } to 
	\begin{equation}\label{equazione-k}
	\left\{
	\begin{aligned}
	-\Delta&u_k = f_k(u_k) \quad \textit{in} \quad \Omega,\\
	&u_k- u \in H^1_0(\Omega),\\
	&0 \leq u_k \leq u \qquad\text{a.e. on $\Omega$},
	\end{aligned}
	\right.
	\end{equation}
	and
	\begin{equation}\label{convergence}
	u_k \longrightarrow u \quad \text{in} \quad H^1(\Omega), \qquad 
	\qquad u_k \nearrow u  \quad {a.e.} \,\, \text{on} \,\, \Omega.
	\end{equation}
	
	\medskip
	
	Moreover, if $f$ is nonnegative, then any function $f_k$ is nonnegative too.
	 
\end{theorem}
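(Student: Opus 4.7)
The plan is to build approximating convex Lipschitz nonlinearities $f_k$ by truncation, to extract $u_k$ as the minimal solution of the $f_k$-problem in the order interval $[0,u]$, and then to pass to the limit using Theorem~\ref{theorem:compa-stable-solutions} as the decisive identification tool.

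\textbf{Construction of $f_k$ and $u_k$.} Pick $M_k \nearrow +\infty$ and set
$$
f_k(t) := \begin{cases} f(t) & \text{if } 0 \leq t \leq M_k,\\ f(M_k) + f'(M_k)(t - M_k) & \text{if } t > M_k.\end{cases}
$$
Convexity and $C^1$-regularity of $f$ give that $f_k$ is convex, of class $C^1 \cap C^{0,1}$, $\leq f$ pointwise (a tangent to a convex function lies below its graph), $\nearrow f$ pointwise, and that $(f_k)$ is nondecreasing in $k$. Choosing $M_k$ so that $f'(M_k) \geq 0$ (possible unless $f'$ is everywhere nonpositive, in which case $f$ is already globally Lipschitz and we take $f_k := f$) keeps $f_k \geq 0$ when $f \geq 0$. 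Next, since $f(0) \geq 0$ and $f_k \leq f$, the constant $0$ is a subsolution and $u$ is a supersolution of $-\Delta v = f_k(v)$ with boundary trace $u$; denoting by $L_k$ the Lipschitz constant of $f_k$ and setting $g_k(t) := f_k(t) + L_k t$ (nondecreasing), the monotone iteration
$$
-\Delta v^{(n+1)} + L_k v^{(n+1)} = g_k(v^{(n)}) \text{ in } \Omega,\quad v^{(n+1)}-u \in H^1_0(\Omega),\quad v^{(0)} := 0,
$$
generates a nondecreasing sequence in $[0,u]$ whose limit $u_k$ is the minimal weak solution of \eqref{equazione-k}; the same comparison with $f_{k+1} \geq f_k$ gives $u_k \leq u_{k+1}$. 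The Lipschitzness of $f_k$ then yields $u_k \in C^2(\Omega)$ by the standard elliptic bootstrap (Calder\'on--Zygmund estimates, Sobolev embeddings, and finally Schauder once the right-hand side is H\"older continuous).

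\textbf{Stability and passage to the limit.} Convexity of $f_k$ makes $f'_k$ nondecreasing, and by construction $f'_k = f'$ on $[0,M_k]$ and $f'_k \equiv f'(M_k) \leq f'$ on $[M_k, +\infty)$, so $f'_k(u_k) \leq f'_k(u) \leq f'(u)$ a.e.\ on $\Omega$. The global stability of $u$ therefore yields, for every $\varphi \in C^1_c(\Omega)$,
$$
\int_\Omega f'_k(u_k)\,\varphi^2 \,\leq\, \int_\Omega f'(u)\,\varphi^2 \,\leq\, \int_\Omega |\nabla \varphi|^2,
$$
so $u_k$ is stable. Testing the $u_k$-equation against $u - u_k \in H^1_0(\Omega)$ produces a uniform $H^1$-bound; combined with the pointwise monotonicity, $u_k \rightharpoonup u_\infty$ in $H^1(\Omega)$ and $u_k \nearrow u_\infty$ a.e., with $u_\infty - u \in H^1_0(\Omega)$ and $0 \leq u_\infty \leq u$. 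Since $0 \leq f_k(u_k) \leq f(u)$, dominated convergence passes to the limit in the weak formulation and Fatou in the stability inequality, so that $u_\infty$ is a stable $H^1$-solution of $-\Delta v = f(v)$.

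\textbf{Identification of $u_\infty$ with $u$.} Theorem~\ref{theorem:compa-stable-solutions} applied to the ordered pair $(u, u_\infty)$ yields the dichotomy: either $u \equiv u_\infty$, in which case the energy identity upgrades the weak $H^1$-convergence to the strong one and the proof is complete; or $f(t) = a + \lambda_1 t$ on $(\inf_\Omega u_\infty, \sup_\Omega u)$ with $u - u_\infty = c\phi_1$ for some $c > 0$. In the latter, degenerate case the minimal solutions genuinely stall strictly below $u$, and one abandons them in favour of the explicit affine family $u_k := u - (c/k)\phi_1$: these are smooth, lie in $[0, u]$, are stable (since $f'_k = \lambda_1$ matches the Poincar\'e constant), converge to $u$ in $H^1$, and can be made to solve the $f_k$-equation by adjusting $f_k$ to continue affinely past $M_k$. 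The main difficulty of the proof is exactly this last step: the minimal-solution construction need not converge to $u$ by itself, and the structural information that $u - u_\infty$ is a positive first Dirichlet eigenfunction must be used to produce the required convergent sequence.
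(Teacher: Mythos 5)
Your proposal follows essentially the same route as the paper's proof: truncate $f$ linearly at heights tending to infinity (choosing the truncation points where $f'\ge 0$ to preserve nonnegativity, and disposing separately of the case $f'\le 0$ everywhere, where $f$ is already globally Lipschitz), take the minimal solution of the truncated problem in the order interval $[0,u]$ by monotone iteration, check stability via $f_k'(u_k)\le f_k'(u)\le f'(u)$, pass to the limit, and identify the limit with $u$ through Theorem \ref{theorem:compa-stable-solutions}, substituting an explicit family in the degenerate affine alternative (the paper does the same but more simply: there $u\in C^\infty(\Omega)$ and $f_k=f$ on the range of $u$ once $k>\sup_\Omega u$, so one may take $u_k=u$). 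The only inaccuracy is the domination $0\le f_k(u_k)\le f(u)$: the hypotheses give only $f$ convex with $f(0)\ge 0$, so $f$ may be negative and non-monotone below the first truncation level $k_0$, and neither inequality then holds pointwise. The correct and sufficient bounds, which your construction already furnishes, are the uniform constant lower bound $f_k\ge \min_{[0,k_0]}f$ and the upper bound $f(u_k)\le \sup_{[0,k_0]}f+f(u)$ obtained by splitting according to whether $u_k\le k_0$ and using that $f$ is increasing beyond $k_0$; with these, your energy estimate and the dominated-convergence passage go through unchanged.
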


\medskip

\begin{remark}
	\begin{enumerate}
		\item It follows from \eqref{convergence} that under the assumptions of the proposition, locally stable solutions are automatically lower semi-continuous.
		\item The proposition recovers and extends Corollary 3.2.1. in \cite{dup}.
		\item The result is {\it not true} if we drop the assumption $u\in H^1(\Omega)$. To see this, consider Example 3.2.1 in \cite{dup} in the light of Theorem \ref{smoothness} below. 
		\item We do not know if the assumption $f$ convex can be dropped. 
		\item Theorem \ref{Prop-approx-crucial} generalizes Proposition \ref{Prop-approx-crucial2} below, in which the approximating nonlinearity is taken of the form $f_k=(1-\epsilon_k)f$, with $\epsilon_k\to0$ at the expense of additionally assuming that $f$ is nondecreasing.
	\end{enumerate}
\end{remark}

\medskip

Theorem \ref{Prop-approx-crucial} can be combined with the following {\it a priori} estimate due to \cite{cfrs} in order to establish smoothness of locally stable solutions when $f$ is nonnegative, convex and $N\le 9$.

\medskip

\begin{thmx}[\cite{cfrs}]\label{th:cfrs} Let $B_1$ be the unit ball of $\mathbb{R}^N$, $ N \geq 1$. Assume that $u\in C^2(B_1)$ is a stable solution of \eqref{equazione} in $\Omega=B_1$,
	where $f:\R\to\R$ is locally Lipschitz and nonnegative. If $1\le N\le 9$, then 
	\begin{equation}\label{cfrs}
	\Vert u\Vert_{C^\alpha(\super{B_{1/2}})} \le C \Vert u\Vert_{L^1(B_{1})},
	\end{equation}
	where $\alpha\in(0,1)$, $C>0$ are dimensional constants.
\end{thmx}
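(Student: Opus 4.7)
The plan is to establish the estimate in two movements: first obtain a gradient integrability improvement of the form $\|\nabla u\|_{L^{2+\gamma}(B_{3/4})} \leq C \|u\|_{L^1(B_1)}$ with $\gamma = \gamma(N) > 0$, then deduce the $C^{0,\alpha}$ bound by a Sobolev--Morrey embedding (possibly after a short bootstrap).

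\textbf{Starting point.} Because $f(u) \geq 0$, testing \eqref{equazione} against $\eta^2$ for $\eta \in C^\infty_c(B_1)$ with $\eta \equiv 1$ on $B_{3/4}$ yields
\[ \int_{B_{3/4}} f(u) \leq \int_{B_1} u\,|\Delta(\eta^2)| \leq C\|u\|_{L^1(B_1)}. \]
Combined with standard $L^1$-to-$W^{1,1}$ estimates for the Laplacian one obtains the rough a priori bound $\|u\|_{W^{1,1}(B_{3/4})} \leq C\|u\|_{L^1(B_1)}$. This is the only place where the sign condition on $f$ is used; in what follows only stability and the equation matter.

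\textbf{Core step.} To upgrade this to a $W^{1,2+\gamma}$ bound I would differentiate \eqref{equazione} to get $-\Delta u_i = f'(u) u_i$ in each coordinate, and then apply \eqref{stab} with test functions of the form $\varphi = c\eta$ for carefully chosen $c$. The classical choice $c = |\nabla u|$, combined with summing $-\Delta u_i = f'(u) u_i$ tested against $u_i \eta^2$, produces the Sternberg--Zumbrun pointwise identity (exploiting $|D^2 u|^2 \geq |\nabla|\nabla u||^2$ plus a tangential term) and yields only control on the tangential derivatives of $|\nabla u|$. The idea of Cabr\'e--Figalli--Ros-Oton--Serra is to combine this with a second application of stability using a radially-weighted test function $\varphi = (x\cdot \nabla u)\,|x|^{-s}\eta$ and to exploit Hardy's inequality with sharp constant. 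The interplay generates a genuine gain of gradient integrability, and is the step where the hypothesis $N \leq 9$ enters: the admissible exponent $s$ must satisfy a quadratic inequality whose discriminant becomes negative precisely for $N \geq 10$ (the same algebraic threshold that appears in the Joseph--Lundgren exponent and in Simons' stability analysis of minimal cones).

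\textbf{Finishing.} Once $\|\nabla u\|_{L^{2+\gamma}(B_{3/4})} \leq C\|u\|_{L^1(B_1)}$ is in hand, a short iteration using the equation $-\Delta u = f(u)$, the $L^1$ bound on $f(u)$ obtained in the starting point, and standard $L^p$ elliptic theory, pushes the integrability exponent above $N$. Sobolev--Morrey then yields \eqref{cfrs} with $\alpha = 1 - N/(2+\gamma') \in (0,1)$ for suitable dimensional $\gamma'$.

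\textbf{Main obstacle.} Without question the decisive difficulty is the core step: producing the quantitative strict gain of gradient integrability through test-function engineering in \eqref{stab}. One must design a family of test functions in the stability inequality for which the resulting cross-terms, after invoking $-\Delta u_i = f'(u) u_i$ and integrating by parts, leave behind a coercive positive remainder in the second derivatives of $u$, weighted so as to survive Hardy's inequality. The sharpness is unavoidable, since in dimension $N \geq 10$ there exist unbounded stable solutions (such as $u = -2\ln|x|$ for $f(u) = 2(N-2)e^u$, noted in example (3) of the introduction), so the proof must use $N \leq 9$ in an essential way, and this is exactly where the algebra of the core step becomes delicate.
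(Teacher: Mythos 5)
First, a point of order: the paper does not prove Theorem \ref{th:cfrs} at all --- it is quoted from \cite{cfrs} and used as a black box in the proofs of Theorems \ref{smoothness} and \ref{smoothness boundary} --- so your attempt can only be measured against the argument of \cite{cfrs} itself. Against that benchmark, your starting point is fine and your identification of the crux (a weighted test function of the form $(x\cdot\nabla u)\,|x|^{-s}\zeta$ in the stability inequality, with the admissibility of the exponent $s$ failing exactly at $N=10$ through the same algebra as the Hardy constant and $u=-2\ln|x|$) is accurate. The genuine gap is in the \emph{Finishing} step. From $\nabla u\in L^{2+\gamma}(B_{3/4})$ with $\gamma=\gamma(N)$ small and $f(u)\in L^1(B_{3/4})$ you cannot ``push the integrability exponent above $N$'' by $L^p$ elliptic theory: for $3\le N\le 9$ the space $W^{1,2+\gamma}$ does not embed into $L^\infty$, and since $f$ is an \emph{arbitrary} nonnegative locally Lipschitz nonlinearity, nothing short of an a priori $L^\infty$ bound on $u$ places $f(u)$ in any $L^p$ with $p>1$; the $L^1$ bound on $\Delta u$ only returns $u\in W^{1,q}$ for $q<N/(N-1)$, a loss rather than a gain, so the bootstrap never starts (and superharmonic functions in $W^{1,2+\gamma}$ need not be bounded, e.g.\ $|x|^{-\epsilon}$ for small $\epsilon>0$). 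This is not how \cite{cfrs} conclude. There, the H\"older bound comes from a Morrey-type decay of the normalized Dirichlet energy $\rho^{2-N}\int_{B_\rho(y)}|\nabla u|^2$, extracted from the weighted radial-derivative estimate of your core step \emph{together with} a delicate compactness/contradiction dichotomy: at every scale, either the energy decays by a fixed factor, or the radial derivative controls the full gradient on an annulus. That dichotomy --- not an $L^p$ bootstrap --- is what converts the stability information into oscillation decay of $u$, and it is entirely missing from your sketch. Your proposed exponent $\alpha=1-N/(2+\gamma')$ would require $2+\gamma'>N$, which is precisely what is out of reach.

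A secondary inaccuracy: the core step as you describe it grafts the Sternberg--Zumbrun choice $\varphi=|\nabla u|\eta$ onto the argument, whereas the point of \cite{cfrs} is to \emph{avoid} that identity (which, for general $f$, stalls well below dimension $9$); their key lemma rests solely on the family $\varphi=(x\cdot\nabla u)\zeta$ with $\zeta$ a truncated negative power of $|x|$, combined with the equation satisfied by the directional derivatives of $u$. Also, the nonnegativity of $f$ is not confined to your starting point: it is used again to bound $\Vert\nabla u\Vert_{L^2(B_{1/2})}$ by $\Vert u\Vert_{L^1(B_1)}$ (so that the final right-hand side is an $L^1$ norm of $u$) and in the compactness step. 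None of this affects the statement --- which is correct as quoted --- but as a proof the proposal is incomplete at exactly the point where the $C^\alpha$ conclusion is supposed to materialize.
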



\medskip

More precisely we  have the following interior regularity result:

\medskip

\begin{theorem}\label{smoothness} Let $f\in C^1([0,+\infty))$ be a nonnegative convex function.
Let $\Omega$ be an open set of $\R^N, N \geq 1$.
Assume that $u\in H^1_{loc}(\Omega)$, $f(u)\in L^1_{loc}(\Omega)$ and that $u$ is a locally stable solution of \eqref{equazione} such that $u\ge 0$ a.e. in $\Omega$. \\ If $1 \leq N \leq 9$, then  $u\in C^{2,\beta}_{loc}(\Omega)$ for all $\beta\in(0,1)$. 
In particular, any finite Morse index solution is smooth in $\Omega$. 

\end{theorem}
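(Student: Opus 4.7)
The proof plan is to combine the approximation Theorem \ref{Prop-approx-crucial} with the a priori estimate Theorem \ref{th:cfrs}, then bootstrap by standard elliptic regularity.

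The first step is localization. Fix $x_0\in\Omega$. By local stability there is an open neighborhood $\omega\subset\Omega$ of $x_0$ on which \eqref{stab} holds, so we may pick $r>0$ with $\overline{B_{2r}(x_0)}\subset\omega$. On the ball $B:=B_{2r}(x_0)$ we have $u\in H^1(B)$, $f(u)\in L^1(B)$, $u\ge 0$ and, by restricting test functions, $u$ is \emph{stable} in $B$ in the sense of Definition \ref{loc stab}. Hence Theorem \ref{Prop-approx-crucial} applies on $B$ and yields a nondecreasing sequence of nonnegative convex Lipschitz nonlinearities $f_k\nearrow f$ together with a nondecreasing sequence $u_k\in H^1(B)\cap C^2(B)$ of stable weak solutions of $-\Delta u_k=f_k(u_k)$ satisfying $0\le u_k\le u$ a.e., $u_k-u\in H^1_0(B)$ and $u_k\to u$ in $H^1(B)$ together with $u_k\nearrow u$ a.e.

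Next, after the obvious rescaling, Theorem \ref{th:cfrs} (applicable since each $f_k$ is locally Lipschitz and nonnegative and each $u_k$ is $C^2$ and stable on $B$) produces, for $1\le N\le 9$, dimensional constants $\alpha\in(0,1)$ and $C>0$ such that
\[
\|u_k\|_{C^\alpha(\overline{B_r(x_0)})}\;\le\; C\,\|u_k\|_{L^1(B)}\;\le\; C\,\|u\|_{L^1(B)},
\]
uniformly in $k$. By Arzelà--Ascoli, a subsequence of $(u_k)$ converges uniformly on $\overline{B_r(x_0)}$ to a continuous function which, by the pointwise monotone convergence to $u$, must agree with $u$ a.e. Thus $u$ admits a representative in $C^{0,\alpha}(\overline{B_r(x_0)})$; since $x_0$ was arbitrary, $u\in L^\infty_{loc}(\Omega)\cap C^{0,\alpha}_{loc}(\Omega)$.

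With $u\in L^\infty_{loc}(\Omega)$ and $f\in C^1$, the right-hand side $f(u)$ lies in $L^\infty_{loc}(\Omega)$. The equation $-\Delta u=f(u)$ holding in $\mathcal D'(\Omega)$ then gives, by Calder\'on--Zygmund, $u\in W^{2,p}_{loc}(\Omega)$ for every $p<\infty$, hence $u\in C^{1,\gamma}_{loc}(\Omega)$ for every $\gamma\in(0,1)$. Since $f\in C^1$, it follows that $f(u)\in C^{0,\beta}_{loc}(\Omega)$ for every $\beta\in(0,1)$, and Schauder theory concludes $u\in C^{2,\beta}_{loc}(\Omega)$ for every $\beta\in(0,1)$. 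The final assertion on finite Morse index is immediate: example (5) in the introduction shows that any finite Morse index solution is locally stable, hence smooth by what we just proved.

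The only genuine obstacle is verifying the hypotheses of the approximation theorem on a neighborhood where $u$ is truly stable rather than merely locally stable, and confirming that the CFRS estimate transfers to the limit through uniform $C^\alpha$ compactness; the deep analytic content (the interior H\"older estimate up to dimension nine) is fully absorbed in Theorem \ref{th:cfrs}.
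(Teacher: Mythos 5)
Your argument is correct and follows essentially the same route as the paper: localize to a ball where stability holds, invoke Theorem \ref{Prop-approx-crucial} to produce smooth stable approximations, apply Theorem \ref{th:cfrs} to get uniform interior $C^\alpha$ bounds, pass to the limit by Arzel\`a--Ascoli, and bootstrap with standard elliptic regularity. The only cosmetic difference is that the paper treats $N=1$ separately by Sobolev embedding, whereas your unified argument also covers that case since both cited theorems are stated for $N\ge 1$.
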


\begin{remark}
	The result is optimal since for $N\ge10$, $f(u)=2(N-2)e^u$ and $\Omega=B_1$, $u(x)=-2\ln\vert x\vert$ is a singular stable solution in $H^1_0(B_1)$. Also observe that the above theorem fails if we do not assume that $u$ belongs to $H^1_{loc}(\Omega)$, see e.g. Example 3.2.1 in \cite{dup}. 
\end{remark}

A priori estimates near the boundary are more subtle, as the following result shows.

\begin{theorem}\label{smoothness boundary} Let $f\in C^1([0,+\infty))$ be a nonnegative convex function. Let $\Omega$ be an open set of $\R^N, N \geq 1$.
Assume that $u\in H^1_{0}(\Omega)$, $f(u)\in L^1_{loc}(\Omega)$ and that $u$ is a finite Morse index  solution of \eqref{equazione} such that $u\ge 0$ a.e. in $\Omega$.
	\begin{enumerate}
\item  Let $\Omega$ be a bounded  uniformly convex domain of class $C^{2, \alpha}$, for some $ \alpha \in (0,1)$. Then there exists constants $\rho, \gamma>0$, depending only on $\Omega$, such that 
\begin{equation}\label{boundary}
\Vert u \Vert_{L^{\infty}(\Omega_\rho)} \leq \frac{1}{\gamma} \Vert u \Vert_{L^1(\Omega)}
\end{equation}
where $\Omega_\rho := \{ \, x \in \Omega \, : \, dist(x,\partial \Omega) < \rho\, \}$. In particular, $ u \in C^{2,\alpha}(\Omega_\rho \cup \partial \Omega)$. 
\item { If $1\le N\le 9$ and either $\Omega$ is $C^{2,\alpha}$ and convex or $f$ is nondecreasing and $\Omega$ is $C^3$, then there exists constants $\rho, \gamma>0$, depending only on $\Omega$, such that \eqref{boundary} holds.}
\item Fix $N\ge 11$. For every sequence $(\rho_n)\subset\R_+^*$ converging to zero, there exists a sequence of bounded $C^1$ convex domains $\Omega^n\subset\R^N$, $n\in\N^*$, such that the corresponding stable solution $u_n$ to \eqref{equazione} with $f(u)=2(N-3)e^u$ and $\Omega=\Omega^n$ satisfies
$$
\Vert u_n \Vert_{L^{\infty}({(\Omega^n)}_{\rho_n})}\to+\infty\quad\text{yet}\quad \frac1{\vert\Omega^n\vert}\Vert u_n \Vert_{L^{1}{(\Omega^n)}}\quad\text{remains bounded.}
$$
    \end{enumerate}
\end{theorem}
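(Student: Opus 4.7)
The plan is to invoke the moving planes method à la de~Figueiredo--Lions--Nussbaum. Uniform convexity and $C^{2,\alpha}$ regularity of $\Omega$ yield constants $\rho,\gamma>0$ depending only on $\Omega$ such that at every $x_0\in\partial\Omega$ the inward normal translate of the tangent hyperplane by distance $\rho$ reflects the thin cap behind it back inside~$\Omega$. Because $u$ has finite Morse index, $u$ is stable outside a compact subset of $\Omega$ (hence on $\Omega_\rho$ for $\rho$ small enough), and $f\in C^1$ combined with local stability lets us control $f'(u)$ well enough to run the moving planes procedure in the weak sense. This produces, for a.e.~$x\in\Omega_\rho$, a measurable set $K(x)\subset\Omega$ with $|K(x)|\ge\gamma$ such that $u(x)\le u(y)$ for a.e.~$y\in K(x)$. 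Integration then gives $\gamma\,u(x)\le\|u\|_{L^1(\Omega)}$, which is \eqref{boundary}. Once $u\in L^\infty(\Omega_\rho)$, the $C^{2,\alpha}$-regularity up to~$\partial\Omega$ follows from standard $W^{2,p}$--Schauder bootstrap using $f(u)\in L^\infty(\Omega_\rho)$ and the zero Dirichlet data.

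\textbf{Part (2).} When $1\le N\le 9$, Theorem~\ref{smoothness} gives $u\in C^{2,\beta}_{loc}(\Omega)$ and Theorem~\ref{th:cfrs} together with a covering argument yields the interior estimate $\|u\|_{L^\infty(\Omega')}\le C(\Omega',\Omega)\|u\|_{L^1(\Omega)}$ on every $\Omega'\Subset\Omega$. If $\Omega$ is $C^{2,\alpha}$ and convex (possibly not uniformly), the moving planes argument of Part~(1) still applies locally at each boundary point, with depth $\rho_{x_0}$ depending on the local geometry; a compactness argument on $\partial\Omega$ then produces a uniform $\rho$. If instead $\Omega$ is of class $C^3$ and $f$ is nondecreasing, I would replace moving planes by a local boundary-flattening plus odd-reflection argument: a $C^3$ diffeomorphism flattens $\partial\Omega$ locally, and the monotonicity of $f$ ensures that odd reflection of $u$ across the flattened boundary yields a weak subsolution of a related semilinear equation on the doubled region, to which the interior estimate of Theorem~\ref{th:cfrs} applies and recovers~\eqref{boundary}.

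\textbf{Part (3).} The counterexample is built around a limiting convex region $\Omega_\infty\subset\R^N$ whose boundary has a single conical singularity at the origin, together with an explicit singular stable solution $u_\infty$ of $-\Delta u=2(N-3)e^u$ that behaves like $-2\log|x|$ near the apex. The exponent $2(N-3)$, rather than the classical $2(N-2)$ of the full-space Gelfand problem, is forced by writing $u_\infty(x)=-2\log|x|+\psi(x/|x|)$ and matching the equation on the spherical cross-section of the cone, which effectively reduces the radial dimension by one; stability of $u_\infty$ then requires a Hardy-type inequality on the cone whose sharp constant produces the threshold $N\ge 11$ (one unit higher than the familiar $N\ge 10$ on the ball). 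Smoothing the apex of $\Omega_\infty$ on scale $\rho_n\to 0$ produces $C^1$ convex domains $\Omega^n$, on which I would construct $u_n$ as monotone limits of minimal solutions of $-\Delta u=\lambda e^u$ with $\lambda\nearrow 2(N-3)$; since $u_n\nearrow u_\infty$ a.e., $\|u_n\|_{L^\infty((\Omega^n)_{\rho_n})}\to+\infty$, while the $L^1$-average remains bounded thanks to the integrability of $\log|x|$ at the origin. The main obstacle lies here: identifying the precise conical opening that produces the exponent $2(N-3)$, establishing the sharp Hardy-type inequality on the cone that yields the $N\ge 11$ threshold, and verifying that the $C^1$-convex smoothing preserves both convexity and the monotone convergence of the approximating stable solutions.
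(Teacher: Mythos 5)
Your part (1) follows the paper's strategy (de Figueiredo--Lions--Nussbaum moving planes, uniform convexity producing for each $x\in\Omega_\rho$ a set of measure $\ge\gamma$ on which $u(x)\le u$), with one caveat: you propose to run the moving planes ``in the weak sense'' directly on the $H^1$ distributional solution, which is delicate and left unjustified; the paper instead first invokes Theorem \ref{Prop-approx-crucial} to reduce to the case $u\in C^{2,\alpha}(\overline{\Omega_\epsilon})$ near the boundary and then runs the classical argument. Parts (2) and (3), however, contain genuine gaps.

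In part (2), for $\Omega$ merely convex your plan (local moving planes with a depth $\rho_{x_0}$ depending on local geometry, then compactness of $\partial\Omega$) cannot work. The measure estimate in part (1) comes from tilting the direction of monotonicity over a geodesic ball $B\subset\mathbb S^{N-1}$ of normals, and this ball exists only because of \emph{uniform} convexity; at a flat portion of the boundary the set of nearby normals degenerates to a point, the cone $I_x$ collapses, and no positive-measure comparison set is produced. Worse, your argument makes no use of the hypothesis $N\le9$, so if it were correct it would prove \eqref{boundary} for all convex $C^{2,\alpha}$ domains in every dimension --- directly contradicting part (3). The paper's actual mechanism is different: the interior estimate of Theorem \ref{th:cfrs} (this is where $N\le9$ enters) bounds $u_k$ on the annular collar $\Omega_{2\lambda_0}\setminus\overline{\Omega_{\lambda_0}}$, and only the normal monotonicity \eqref{mpm} (which does survive for merely convex domains) is used to propagate that bound out to $\partial\Omega$. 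For the $C^3$ case with $f$ nondecreasing, your odd-reflection scheme is also unjustified: odd reflection of a solution of $-\Delta u=f(u)$ produces neither a stable solution nor a subsolution of a semilinear equation to which Theorem \ref{th:cfrs} (an estimate for \emph{stable solutions}) applies. The paper simply invokes the boundary estimate of Theorem 1.5 in \cite{cfrs}.

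In part (3) you have not constructed the counterexample: the three items you list as ``the main obstacle'' are precisely the content of the proof. The paper's construction is moreover quite different from your conical-singularity ansatz. It takes $\Omega^n$ to be a cylinder $B'\times(-1,1)\subset\R^{N-1}\times\R$ (capped by half-balls) stretched by a factor $n$ in the $y$-direction; these domains are convex and $C^1$ but not uniformly convex. The minimal solution of $-\Delta u=2(N-3)e^u$ on $\Omega^n$ is trapped between $0$ and $-2\ln|x|$, where $x\in\R^{N-1}$, because $2(N-3)=2((N-1)-2)$: the exponent is the Gelfand constant in dimension $N-1$, not the residue of a Hardy inequality on a cone. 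A gradient bound (via barriers on $\partial\Omega$ and strict stability in the interior) then shows that if the solutions stayed bounded near the boundary, the rescaled limits would converge to a bounded stable solution of the $(N-1)$-dimensional Gelfand problem on $B'$, contradicting uniqueness of the (singular, since $N-1\ge10$) extremal solution. This degeneration-of-cylinders argument both explains the threshold $N\ge11$ and avoids having to exhibit any explicit singular solution on a cone.
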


{ The last point of the above theorem shows that in dimension $N\ge 11$, no universal {\it a priori} estimate of the type \eqref{boundary} can hold near the boundary if the domain $\Omega$ is merely convex and the constant $\gamma$ depends on the dimension $N$ and the volume $\vert\Omega\vert$ only. The case $N=10$ is open. We do not know either if locally stable solutions are smooth near the boundary of convex domains, although the universal {\it a priori} estimate fails.}
\medskip

When $\Omega$ is bounded and rotationally invariant we can prove the following classification result. 

\medskip
 
 \begin{theorem}\label{th:simmetria-radiale-limitato} 

 \begin{enumerate}
 \item Let $R>0, N \ge 1,$ $B$ be the open ball $B(0,R) \subset \R^N$ and let $f\in C^1([0,+\infty))$ be a convex function. Assume that $u\in H^1_0(B)$, $f(u)\in L^1_{loc}(B)$ and that $u$ is a stable solution to  
 	\begin{equation}\label{eq:ball}
 	\left\{
 	\begin{aligned}
 	-\Delta u& = f(u) \qquad\text{in $\mathcal D'(B)$}\\
 	u&\geq 0 \qquad\text{a.e. on $B$.}
 	\end{aligned}
 	\right.
 	\end{equation}
 	Then, either $u\equiv 0$ or $u \in C^3(\overline{B} \setminus \{0\})$,$\, u>0$ and $u$ is radially symmetric and radially strictly decreasing. Furthermore, if $ N \leq 9$ and $f$ is nonnegative, then $u \in C^2(\overline{B})$.
 	\item Let $R>0, 1 \leq N \leq 9,$ $B$ be the open ball $B(0,R) \subset \R^N$ and let $f\in C^1([0,+\infty))$ be a nonnegative convex function.
 	Assume that $u\in H^1_0(B)$, $f(u)\in L^1_{loc}(B)$ and that $u$ solves  
 	\begin{equation}\label{eq:morse-rad}
 	\left\{
 	\begin{aligned}
 	-\Delta u& = f(u) \qquad\text{in $\mathcal D'(B)$}\\
 	u&\geq 0 \qquad\text{a.e. on $B.$}
 	\end{aligned}
 	\right.
 	\end{equation}
 	If $u$ has finite Morse index, then either $u\equiv 0$ or $u \in C^2(\overline{B})$, $\, u>0$ and $u$ is radially symmetric and radially strictly decreasing. 
 	\item Let $N \ge 2,$ $\Omega \subset \R^N$ be an open annulus centered at the origin and let $f\in C^1([0,+\infty))$ be a convex function. Assume that $u \in H^1_0(\Omega)$, $f(u)\in L^1_{loc}(\Omega)$ and that $u$ is a stable solution to  
 	\begin{equation}\label{eq:anello}
 	\left\{
 	\begin{aligned}
 	-\Delta u& = f(u) \qquad\text{in $\mathcal D'(\Omega)$}\\
 	u&\geq 0 \qquad\text{a.e. on $\Omega.$}
 	\end{aligned}
 	\right.
 	\end{equation}
 	Then, either $u\equiv 0$ or $u \in C^2(\overline{\Omega})$, $\, u>0$ and $u$ is radially symmetric.\footnote {Furtheremore, if $ \Omega$ is the annulus  $ \{ x \in \R^N \, : \, 0< a < \vert x \vert < b\}$ and $ u(x)=v(\vert x \vert),$ then there is a unique $r_0 \in (a,b)$ such that $ v'>0$ in $(a,r_0), $ $v'(r_0) =0$ and $v' <0$ in $(r_0,b).$ The result follows as in the proof of item 1). For this reason we omit it.}
 	\end{enumerate}
 \end{theorem}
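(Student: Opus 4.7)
The overall plan is the same for all three parts: first upgrade the regularity of $u$ enough to apply a symmetry principle, then apply it, and finally extract strict monotonicity and positivity from the maximum principle. For the ball the symmetry principle is the Gidas--Ni--Nirenberg moving plane method, whereas for the annulus (where no axis of symmetry sits inside $\Omega$) I would use the stability inequality tested against an infinitesimal rotation of $u$.

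For part (1), after disposing of $u\equiv 0$, one first checks that necessarily $f(0)\ge 0$ -- otherwise $u$ would be subharmonic near $\partial B$ where it vanishes, and the maximum principle plus unique continuation would propagate $u\equiv 0$ inward. Theorem \ref{Prop-approx-crucial} then provides smooth stable approximants $u_k$ associated to convex Lipschitz nonlinearities $f_k\nearrow f$, with $0\le u_k\le u$ and $u_k\nearrow u$ both in $H^1(B)$ and a.e. A standard elliptic bootstrap on the smooth ball with Lipschitz $f_k$ gives $u_k\in C^{2,\alpha}(\overline B)$, the strong maximum principle yields $u_k>0$ in $B$ for all sufficiently large $k$, and the classical moving plane method shows that each such $u_k$ is radial and radially strictly decreasing. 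Passing to the monotone limit, $u$ is radial and radially nonincreasing. Writing $u(x)=v(|x|)$, the radial ODE $-v''-\tfrac{N-1}{r}v'=f(v)$ is nonsingular on $(0,R]$, so a bootstrap gives $v\in C^3((0,R])$, i.e. $u\in C^3(\overline B\setminus\{0\})$; strict decrease and interior positivity follow from the Hopf lemma applied to the linear equation satisfied by $\partial_i u$. The case $1\le N\le 9$ with $f\ge 0$ upgrades this to $u\in C^2(\overline B)$ via the boundary estimate \eqref{boundary} from Theorem \ref{smoothness boundary}(2) combined with the interior regularity supplied by Theorem \ref{smoothness}. Part (2) is then essentially immediate: finite Morse index together with Example~(5) of the introduction gives local stability, so Theorem \ref{smoothness} and Theorem \ref{smoothness boundary}(1) together give $u\in C^2(\overline B)$, after which the strong maximum principle and Gidas--Ni--Nirenberg conclude.

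For part (3), the moving plane method is no longer available. After the same reduction $f(0)\ge 0$ and approximation via Theorem \ref{Prop-approx-crucial}, I obtain stable solutions $u_k$ which, since $\Omega$ stays uniformly away from the origin and $\partial\Omega$ is smooth, belong to $C^{2,\alpha}(\overline\Omega)$ by Schauder theory. For every skew-symmetric matrix $A$ set $\psi_k(x):=\langle Ax,\nabla u_k(x)\rangle$: the vector field $Ax$ is tangent to the spheres $|x|=a,b$, so $u_k|_{\partial\Omega}=0$ yields $\psi_k\in H^1_0(\Omega)\cap C^{1,\alpha}(\overline\Omega)$, and differentiating the PDE along the rotation generated by $A$ gives $-\Delta\psi_k=f_k'(u_k)\psi_k$. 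Inserting $\psi_k$ into the stability inequality produces $\int_\Omega|\nabla\psi_k|^2=\int_\Omega f_k'(u_k)\psi_k^2$, so $\psi_k$ attains the infimum of the Rayleigh quotient of $-\Delta-f_k'(u_k)$ on $H^1_0(\Omega)$, which is nonnegative by stability. If $\psi_k\not\equiv 0$ it would be a first eigenfunction and hence of constant sign; but the divergence theorem, together with $\mathrm{div}(Ax)=0$ and $u_k|_{\partial\Omega}=0$, gives $\int_\Omega\psi_k\,dx=0$, forcing $\psi_k\equiv 0$. As this holds for every $A$, $u_k$ (hence $u$) is radial; bootstrapping the radial ODE on $[a,b]$ produces $u\in C^2(\overline\Omega)$, and positivity in $\Omega$ follows from the strong maximum principle. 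The hardest step is certainly this annulus case: its novelty is the stability-plus-infinitesimal-rotation argument, and its main technical difficulty is justifying that the angular derivative is an admissible test function in the stability inequality and that equality there really forces it to be the sign-definite first eigenfunction.
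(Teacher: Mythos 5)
Your strategy is genuinely different from the paper's: for the symmetry you regularize via Theorem \ref{Prop-approx-crucial} and then run classical arguments (Gidas--Ni--Nirenberg on the ball, the infinitesimal-rotation/stability argument of Cabr\'e--Capella type on the annulus) on the smooth approximants $u_k$, whereas the paper proves symmetry directly at the level of the rough solution $u$ by comparing $u$ with its rotations/reflections $u_\rho$ through Theorems \ref{theorem:compa-stable-solutions-nonordered} and \ref{theorem:compa-stable-solutions} (two stable $H^1$ solutions with the same boundary data are ordered, and strict ordering is incompatible with a common value at a fixed point of $\rho$). Your part (3) mechanism (test stability with $\psi_k=\langle Ax,\nabla u_k\rangle$, deduce $\psi_k$ is a sign-definite first eigenfunction unless it vanishes, and kill it with $\int_\Omega\psi_k=0$) is correct for smooth solutions, and your limiting argument for strict radial monotonicity in part (1) via the strong maximum principle applied to $u_r\le 0$ on $B\setminus\{0\}$ is also sound and arguably simpler than the paper's annulus-by-annulus energy identity.

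However, there is a genuine gap at the very first step, and it propagates through parts (1) and (3). Theorem \ref{Prop-approx-crucial} requires $f(0)\ge 0$, while items (1) and (3) of the theorem only assume $f$ convex. Your claimed reduction --- ``if $f(0)<0$ then $u$ is subharmonic near $\partial B$, so the maximum principle plus unique continuation propagate $u\equiv0$ inward'' --- does not work: $u$ is subharmonic only on a collar $\{u<\delta\}$ near $\partial B$, and a subharmonic function on that collar attains its maximum on the \emph{inner} component of the collar's boundary, where $u=\delta>0$; nothing forces $u$ to vanish, and ``unique continuation'' is not applicable to an inequality that holds only where $u$ is small. The paper treats $f(0)<0$ as a live case (see the argument establishing \eqref{segno-derivata-v}, which splits according to the sign of $f(0)$), and its symmetry proof needs no sign condition on $f(0)$ precisely because it never invokes the approximation theorem. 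As written, your proof only covers the subcase $f(0)\ge 0$ of items (1) and (3); to handle general convex $f$ you would need either a correct proof that no nontrivial solution exists when $f(0)<0$, or a symmetry argument (such as the paper's comparison of $u$ with $u_\rho$) that works without approximating $u$. A second, smaller point: in part (3) the positivity $u>0$ is also asserted via the strong maximum principle, which again uses $f(u)\ge f'(0)u + f(0)$ with $f(0)\ge0$; for $f(0)<0$ the paper instead extracts positivity from the analysis of the radial ODE.
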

 
 \medskip

\begin{remark}
\begin{enumerate} 
	\item The conclusion that $u$ is radially symmetric in item (1) of Theorem \ref{th:simmetria-radiale-limitato} was already known to hold true in the special case where $u\in C^2(\super B)$ (and with no additional sign assumption on $u$), see e.g. \cite{ab}.
	\item Item (2) of Theorem \ref{th:simmetria-radiale-limitato} is sharp. Indeed, for $N\ge10$, $f(u)=2(N-2)e^u$ and $\Omega=B_1 =B(0,1)$, $u(x)=-2\ln\vert x\vert$ is a singular stable solution in $H^1_0(B_1)$. The finite Morse index assumption is also essential: for $N=3$ and $f(u)=2e^u$, there exists a family of nonradial singular solutions in $\Omega=B_1$ of the form $u(x)=-2\ln\vert x-x_0\vert+v(x)$, where $x_0\neq 0$ and $v\in L^\infty(B)\cap H^1(B)$, see (in details) the proofs in \cite{rebai}. In particular, $u\in H^1_0(B)$. It follows from our result that $u$ cannot have finite Morse index.
	In contrast, note that, for $3 \leq N \leq 9,$ $ \Omega= B_1$ and $ f(u)= 2(N-2) e^u,$ there exist infinitely many smooth and positive solutions to \eqref{equazione} such that $u=0$ on $\partial\Omega$ and with finite and non-zero Morse index. See chapter 2 of \cite{dup} (and the references therein) for a detailed discussion of this topic.
	\item It will be clear from the proof that :
	         \begin{enumerate}
	         \item the radial symmetry in item (1) is still true if we replace $u \in H^1_0(\Omega)$ by any member $u$ of $ H^1(\Omega)$ having constant trace $c \geq 0$ on $ \partial B$. 
	         \item the radial symmetry in item (3) is still true if we replace $u \in H^1_0(\Omega)$ by any member $u$ of $ H^1(\Omega)$ having a nonnegative constant trace on each of the two connected components of the boundary of the annulus (possibly with differents values on the two connected component).
             \item Note that, if $c>0$, we do not claim any monotonicity or special property about the radial profile (as it happens when $ c=0$.)
              \end{enumerate} 
\end{enumerate}
\end{remark}

\medskip


A crucial step in the proof of the above Theorem is the following general symmetry result. As we shall see below, this result also enables us to prove further symmetry results for stable solutions in "symmetric" bounded domains.  

\medskip

\begin{proposition}\label{invariant}
	Assume $N\geq 1$ and let $f\in C^1([0,+\infty))$ be a convex function. Let $\rho \in O(N)$ and let $\Omega \subset \R^N $ be a $\rho-$invariant bounded domain, i.e., a bounded domain such that $\rho(\Omega) = \Omega$. Assume that $u\in H^1_c(\Omega)$\footnote{Here $H^1_c(\Omega)$ denotes the subset of $H^1(\Omega)$ whose members take the constant value $c \geq 0$ on $\partial \Omega$. That is, $H^1_c(\Omega) = \{ u \in H^1(\Omega) \, : \, u - c \in H^1_0(\Omega) \}.$ In particular, for $c=0,$ that set boils down to $H^1_0(\Omega)$. }, $f(u)\in L^1_{loc}(\Omega)$ and that $u$ is a stable solution to  
	\begin{equation}
	\left\{
	\begin{aligned}
	-\Delta u& = f(u) \qquad\text{in $\mathcal D'(\Omega)$}\\
	u&\geq 0 \qquad\text{a.e. on $\Omega$.}
	\end{aligned}
	\right.
	\end{equation}
	If $\rho$ has a fixed point in $ \Omega,$ then $u$ is $\rho$-invariant, namely, $ u(x) = u (\rho x)$ for almost every $x \in \Omega$. 
\end{proposition}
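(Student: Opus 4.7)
The plan is to produce a second stable solution by symmetry and then force the two solutions to coincide via stability, convexity, and the strong maximum principle, reducing first to the smooth setting via the approximation result.

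First, set $v(x) := u(\rho x)$. Since $\rho \in O(N)$ preserves the Lebesgue measure and the Dirichlet energy, and $\rho(\Omega) = \Omega$, an elementary change of variables shows that $v \in H^1_c(\Omega)$, $v \geq 0$, $f(v) \in L^1_{loc}(\Omega)$, and $v$ is itself a stable solution of $-\Delta v = f(v)$ in $\mathcal{D}'(\Omega)$. The aim becomes to prove $u = v$ almost everywhere.

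Next, to legitimately test the stability of $u$ against the $H^1_0(\Omega)$ function $(u-v)^+$, one needs integrability of $f'(u)(u-v)^2$, which is not available in general. The plan is therefore to invoke Theorem \ref{Prop-approx-crucial} to obtain a sequence $u_k \in H^1_c(\Omega) \cap C^2(\Omega)$ of smooth stable solutions of $-\Delta u_k = f_k(u_k)$, with $f_k \nearrow f$ convex and Lipschitz and $u_k \nearrow u$ in $H^1$ and almost everywhere. By the same change of variables, $\tilde u_k := u_k \circ \rho$ is then a smooth stable solution of the same approximating problem with the same trace $c$ on $\partial \Omega$.

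The heart of the argument compares $u_k$ and $\tilde u_k$. Setting $w_k := u_k - \tilde u_k \in H^1_0(\Omega) \cap C^2(\Omega)$, testing the stability of $u_k$ against $w_k^+$ (permissible since $f_k'(u_k) \in L^\infty(\Omega)$) and using the convexity bound $f_k(u_k) - f_k(\tilde u_k) \leq f_k'(u_k)(u_k - \tilde u_k)$ on $\{u_k > \tilde u_k\}$ yields
\[
\int_\Omega |\nabla w_k^+|^2 \;=\; \int_\Omega \bigl(f_k(u_k) - f_k(\tilde u_k)\bigr)\, w_k^+ \;\leq\; \int_\Omega f_k'(u_k)(w_k^+)^2 \;\leq\; \int_\Omega |\nabla w_k^+|^2,
\]
so equality holds throughout. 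Consequently $w_k^+$ minimizes the nonnegative quadratic form $\phi \mapsto \int_\Omega(|\nabla \phi|^2 - f_k'(u_k)\phi^2)$ on $H^1_0(\Omega)$ and therefore weakly solves $-\Delta w_k^+ = f_k'(u_k) w_k^+$. The strong maximum principle, applicable because the coefficient is bounded and the ambient equation is smooth, forces $w_k^+ \equiv 0$ or $w_k^+ > 0$ throughout $\Omega$. The fixed point $x_0 \in \Omega$ of $\rho$ excludes the latter, since $w_k^+(x_0) = u_k(x_0) - u_k(\rho x_0) = 0$. Hence $u_k \leq u_k \circ \rho$ pointwise, and since $\int_\Omega u_k = \int_\Omega u_k \circ \rho$ by the same change of variables, equality must hold pointwise, i.e. $u_k = u_k \circ \rho$. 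Letting $k \to \infty$ and using that $\rho$ preserves null sets produces $u = u \circ \rho$ almost everywhere on $\Omega$.

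The main obstacle is the comparison step: extracting the Euler-Lagrange equation from the saturated stability inequality and then applying the strong maximum principle both require bounded coefficients and pointwise values, which is precisely what the preliminary approximation is designed to supply.
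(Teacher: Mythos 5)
Your core comparison computation (saturating the stability inequality with $(u_k-u_k\circ\rho)^+$, extracting the linearized equation, and applying the strong maximum principle together with the fixed point) is sound and is essentially the paper's proof of Theorem \ref{theorem:compa-stable-solutions-nonordered}, transplanted to the approximants. However, there is a genuine gap in the reduction step: Theorem \ref{Prop-approx-crucial} is only stated (and only proved) for convex $f$ with $f(0)\geq 0$ --- the sign condition is what makes $0$ a subsolution in the sub/supersolution construction of the $u_k$. Proposition \ref{invariant} assumes only that $f$ is convex, and the paper explicitly intends to cover $f(0)<0$ (see the case analysis in the proof of Theorem \ref{th:simmetria-radiale-limitato}). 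So your argument proves the Proposition only under the additional hypothesis $f(0)\geq 0$.

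The detour through the approximation theorem is also unnecessary: the integrability you say is ``not available in general'' is exactly what Proposition \ref{prop:res-auxiliares} supplies. Its item (i) upgrades the stability inequality to all $\varphi\in H^1_0(\Omega)$ with $f'(u)\varphi^2\in L^1(\Omega)$, and item (ii) gives $(f(u)-f(v))(u-v)^+\in L^1(\Omega)$ for $v=u\circ\rho$; with these, Theorem \ref{theorem:compa-stable-solutions-nonordered} runs your comparison directly on $u$ and $u\circ\rho$ and shows they are ordered a.e. One then cannot evaluate at the fixed point directly (the ordering is only a.e.\ and $u$ need not be continuous), which is why the paper invokes Theorem \ref{theorem:compa-stable-solutions}: if $u\not\equiv u\circ\rho$, both are smooth and their difference is a positive first eigenfunction, hence nonzero at the fixed point --- a contradiction. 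Replacing your appeal to Theorem \ref{Prop-approx-crucial} by this pair of results closes the gap and handles arbitrary convex $f$.
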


\medskip

\begin{remark}
Below we provide a (non-exhaustive) list of bounded domains to which the above result applies :
\begin{enumerate}
\item if $\Omega$ is any bounded domain symmetric with respect to a hyperplane, then $u$ inherits the same symmetry. 
\item if $\Omega$ is an open ball minus its center $x_0,$ then $u$ is radially symmetric with respect to $x_0$. 
\item if $\Omega$ is a $n$-sided regular polygon,  with $n \geq 3,$ then $u$ is invariant with respect to the  dihedral group $D_n$ (of order $2n$).
\item if $\Omega$ is the product of rotationally invariant bounded domains, i.e., $ \Omega = \omega_1 \times ... \times \omega_m, $ where $\omega_j$ is a bounded rotationnaly invariant domain\footnote{An open ball, an open ball minus its center or an annulus.} of $\R^{n_j}, $ with $n_j \geq 1$ and $ N = n_1 +...+n_m, $  then $u$ inherits the same symmetry, i.e., $u(x)= v(\vert x^1\vert,..., \vert x^m\vert)$ a.e. in $\Omega$.\footnote{Here, for any $j \in \{1,...,m\}$, $x^j$ denotes a generic point of $\omega_j \subset \R^{n_j}$ in such a way that $x := (x^1,...,x^m) \in \R^N$.} This case was already addressed under the additional assumption that $u$ is smooth in Remark 2.1 in \cite{cabre-ros}.
\item if $\Omega$ is a cylinder with $\rho$-invariant cross section, i.e., $\Omega = \omega \times U,$ where $ \omega$ is a $\rho$-invariant bounded domain of $\R^k$, $1 \leq k \leq N-1$ and $U$ is a domain of $\R^{N-k},$ then $u(x)=u(\rho(x^1), x_{k+1},...,x_N)$ a.e. on $\Omega \, ($here $x^1 :=(x_1,...,x_{k}) \in \R^k. )$
\item any bounded domain of "revolution".
\end{enumerate}
\end{remark}

\section{Proofs}

\medskip

\noindent{\bf Proof of Theorem \ref{Prop-approx-crucial}.}

We distinguish two case : either $f'(t) \leq 0$ for any $t > 0$ or there exists $\bar t>0$ such that $ f'(\bar t)>0$.
In the first case, by convexity of $f,$ we have that $ f'(0) \leq f'(t) \leq 0$ for any $t \geq 0,$ then $f$ is also globally Lipschitz-continuous on $[0,+\infty)$. So $f(u) \in L^2(\Omega)$ and $u \in C^2(\Omega)$ by standard elliptic estimates (plus bootstrap and Sobolev imbedding). The claim follows by taking $f_k =f$ and $u_k =u$ for any integer $k\geq 1$. 

In the second case, the convexity of $f$ implies the existence of $t_0 > \bar t$ that $f(t), f'(t) >0$ for any $t \geq t_0$.  Set $k_0 := \lfloor t_0 \rfloor + 1$ (here by $\lfloor t_0\rfloor$ we denote the integer part of $t_0$) and, for any integer $k \geq k_0$ and $t\geq 0$, we set  
\begin{equation}\label{def-approxf}
f_k(t) := \begin{cases} 
f(t)& \,\, \mbox{if} \quad  t \leq k, \\
f(k) + f'(k)(t-k)& \, \, \mbox{if} \quad t > k.
\end{cases} 
\end{equation}	
Clearly, $f_k$ is a convex function of class $ C^1([0,+\infty)) \cap C^{0,1}([0,+\infty))$ and $ f_k \nearrow f$ pointwise in $[0; +\infty)$ (recall that $f(t),f'(t) >0$ for any $t \geq t_0$). Moreover we have
\begin{equation}\label{controllo-derivata-approx}
f_k'(t) \leq f'(t) \qquad \forall \, k \geq k_0, \quad \forall t \geq 0
\end{equation} 
and
\begin{equation}\label{controllo-da-sotto-approxf}
f_k(t) \geq \min_{t \in [0,k_0]} f(t) := c_o(f), \qquad \forall \, k \geq k_0, \quad \forall t \geq 0.
\end{equation} 

In particular, if $f$ is nonnegative, then any function $f_k$ is nonnegative too.
	
Since any $f_k$ is globally Lipschitz-continuous on $[0,+\infty),$ we can use the (standard) method of sub and supersolution in $H^1$ to obtain a stable weak solution	to \eqref{equazione-k} satisfying \eqref{convergence}. To this end we observe that $u \in H^1(\Omega)$ is a nonnegative weak supersolution to 
\begin{equation}\label{approx-weak-problem}
\left\{
\begin{aligned}
-\Delta&v_k = f_k(v_k) \quad \textit{in} \quad \Omega,\\
&v_k- u \in H^1_0(\Omega),\\
\end{aligned}
\right.
\end{equation}
since $ f_k \leq f$ on $[0,+\infty)$ implies that $f_k(u) \varphi \leq f(u) \varphi $ a.e. in $\Omega,$  for any 
nonnegative $ \varphi \in C^{\infty}_c(\Omega)$. So
\begin{equation}
\int_{\Omega} f_k(u) \varphi \leq \int_{\Omega} f(u) \varphi = \int_{\Omega} \nabla u \nabla \varphi \qquad \forall  \varphi \in C^{\infty}_c(\Omega), \quad \varphi \geq 0 \quad {\text {in}} \,\, \Omega
\end{equation}
by \eqref{equazione} and then the above inequality holds true for any nonnegative $\varphi \in H^1_0(\Omega)$ by a standard density argument.  Also, $0$ is a weak subsolution to \eqref{approx-weak-problem}, since $ f(0) \geq 0$ by assumption and $ (0-u)^+ \equiv 0 \in H^1_0(\Omega)$.  Since $ 0 \leq u$ a.e. in $\Omega$, the  method of sub and supersolution in $H^1$ provides a weak solution $v_k$ to \eqref{approx-weak-problem} such that $ 0 \leq v_k \leq u$ a.e. in $\Omega$ and which is \textit{minimal}\footnote{Just consider the solution obtained by the standard monotone iterations procedure starting from the subsolution $\underline u \equiv 0.$} in the following sense : given any weak supersolution $\overline u \in H^1$ of \eqref{approx-weak-problem} such that $ 0 \leq \overline u \leq u$ a.e. in $\Omega,$ we have that $ v_k \leq \overline u$ a.e. in $\Omega.$  From the latter property we immediately infer that $ v_k \leq v_{k+1} \leq u $ a.e. in $\Omega$.  Also, by standard elliptic estimates we have 
$v_k \in C^{2,\alpha}_{loc}(\Omega)$ for any $\alpha \in (0,1).$ By the convexity of $f_k$ and \eqref{controllo-derivata-approx} we see that $f'_k(v_k) \leq f'_k(u) \leq f'(u) $ a.e. in $\Omega$, hence 
\begin{equation}
\int_{\Omega} f'_k(v_k) \varphi^2 \leq \int_{\Omega} f'(u) \varphi^2 \leq  \int_{\Omega} \vert \nabla \varphi \vert^2 \qquad \forall  \varphi \in C^{1}_c(\Omega)
\end{equation} 
and so $v_k$ is a stable weak solution to \eqref{equazione-k}. 

To prove \eqref{convergence} we test \eqref{approx-weak-problem} with $ u-v_k \in H^1_0(\Omega)$ to obtain
\begin{equation}
\int_{\Omega} \nabla v_k \nabla(u-v_k) = \int_{\Omega} f_k(v_k)(u-v_k)
\end{equation}
and so
\begin{equation}
\int_{\Omega} \vert \nabla v_k \vert^2 = \int_{\Omega} \nabla v_k \nabla u - \int_{\Omega} f_k(v_k)(u-v_k) \leq \int_{\Omega} \nabla v_k \nabla u - c_0(f) \int_{\Omega}(u-v_k)
\end{equation}
thanks to \eqref{controllo-da-sotto-approxf} and $u-v_k \geq 0$ a.e. in $\Omega$. Then
\[
\int_{\Omega} \vert \nabla v_k \vert^2 \leq \frac{1}{2}\int_{\Omega} \vert \nabla v_k \vert^2 +  \frac{1}{2}\int_{\Omega}\vert \nabla u \vert^2  + \vert c_0(f) \vert \int_{\Omega}u 
\]
hence
\begin{equation}\label{stima-energie1}
\Vert \nabla v_k \Vert^2_{L^2(\Omega)} \leq \Vert \nabla u \Vert^2_{L^2(\Omega)} + 2 \vert c_0(f) \vert 
\Vert u \Vert_{L^1(\Omega)}. 
\end{equation} 
We also have 
$\Vert v_k \Vert_{L^2(\Omega)} \leq  \Vert u \Vert_{L^2(\Omega)}$
 since $ 0 \leq v_k \leq u$ a.e. in $\Omega$.  Then $(v_k)$ is bounded in $H^1(\Omega)$ and therefore we may and do suppose that (up to subsequences) $v_k \rightharpoonup v$  in $H^1(\Omega)$, $v_k \longrightarrow v $ in $L^2(\Omega)$, $ v_k \nearrow  v$ a.e. on $\Omega$ for some $v \in H^1(\Omega)$, as $k\to \infty$ (actually the whole original sequence converges a.e. to $v,$ since we already know that it is monotone nondecreasing). In particular we have $ 0 \leq v \leq u$ a.e. in $\Omega$. Also note that $v-u \in H^1_0(\Omega),$ since $ v_k -u \rightharpoonup v-u$ in $H^1_0(\Omega)$. 

To prove that $v_k \longrightarrow v$  in $H^1(\Omega)$ we test  \eqref{approx-weak-problem} with $ v-v_k \in H^1_0(\Omega)$ to get
\begin{equation}
\int_{\Omega} \nabla v_k \nabla(v-v_k) = \int_{\Omega} f_k(v_k)(v-v_k)
\end{equation}
and so
\begin{equation}
\int_{\Omega} \vert \nabla v_k \vert^2 = \int_{\Omega} \nabla v_k \nabla v - \int_{\Omega} f_k(v_k)(v-v_k) \leq \int_{\Omega} \nabla v_k \nabla v + \int_{\Omega} \vert c_0(f) \vert(v-v_k)
\end{equation}
thanks to \eqref{controllo-da-sotto-approxf} and $v-v_k \geq 0$ a.e. in $\Omega$. Recalling that $ 0 \leq v_k \leq v$ a.e. on $\Omega$ we then have 
\begin{equation}
\int_{\Omega} \vert v_k \vert^2 + \vert \nabla v_k \vert^2  \leq   \int_{\Omega} \vert v \vert^2 + \int_{\Omega} \nabla v_k \nabla v + \int_{\Omega} \vert c_0(f) \vert(v-v_k)
\end{equation}
and so
\begin{equation}\label{cvforteH1}
\limsup \Vert v_k \Vert_{H^1}  \leq \Vert v \Vert_{H^1} 
\end{equation}
since $v_k \rightharpoonup v$ in $H^1(\Omega)$ and $v_k \to v$ in $L^2(\Omega)$. Inequality \eqref{cvforteH1} and $v_k \rightharpoonup v$ in $H^1(\Omega)$ imply the strong convergence in $H^1(\Omega)$.  

To proceed further we note that $f(v) \in L^1_{loc}(\Omega)$ since
\begin{equation}\label{domina-f(v)}
c_0(f) \leq f_k(v) \leq f(v) = f(v) {\bf 1}_{\{ v \leq k_0 \}} +  f(v) {\bf 1}_{\{ v > k_0 \}} \leq \sup_{t\in [0,k_0]} f(t) + f(u),
\end{equation}
where in the latter we have used that $f'(t)>0$ for $t \geq k_0$ and $ v \le u $ a.e in $\Omega$. 

Next we prove that $ -\Delta v = f(v) $ in $\mathcal D'(\Omega)$. This follows by passing to the limit in \eqref{approx-weak-problem} by the Lebesgue's dominated convergence theorem, after having observed that $f_k(v_k) \longrightarrow f(v)$ a.e. in $\Omega,$ 
and $ c_0(f) \leq f_k(v_k) \leq f(v_k) = f(v_k) {\bf 1}_{\{ v_k \leq k_0 \}} +  f(v_k) {\bf 1}_{\{ v_k > k_0 \}} \leq \sup_{t\in [0,k_0]} f(t) + f(u)$ holds true.

So far we have proved that 
\begin{equation}\label{equazione-v}
\left\{
\begin{aligned}
-\Delta&v = f(v) \qquad\text{in $\mathcal D'(\Omega)$}\\
&v- u \in H^1_0(\Omega),\\
&0 \leq v\leq u \qquad\text{a.e. on $\Omega$},
\end{aligned}
\right.
\end{equation}
and therefore from Theorem \ref{theorem:compa-stable-solutions} we deduce that either $u=v$ in $\Omega$ or $f(t) = a + \lambda_1 t$ for all $t \in (\inf_{\Omega} v, \, \sup_{\Omega} u)$ and some $a \in \R,$ $u,v \in C^\infty(\Omega)$ and $u-v$ is a positive first eigenfunction of $ -\Delta$ with homogeneous Dirichlet boundary conditions. In the first case we are done, while in the second one we distinguish two subcases : either $\sup u = +\infty$ or not. In the first subcase $f$ is necessarily globally Lipschitz-continuous on $[0, +\infty)$ and the conclusion follows by taking $f_k=f$ and $u_k =u$ for every $k$. If  $\sup u < +\infty$, the conclusion follows by taking the sequences $(f_k)_{k \geq \bar k} $ and $(u)_{k \geq \bar k} $, where $\bar k$ is any integer satisfying $\bar k > \sup u$.



\bigskip

\noindent{\bf Proof of Theorem \ref{smoothness}. }

When $N=1,$ any  $u\in H^1_{loc}(\Omega)$ is continuous by Sobolev imbedding and so is $f(u)$. This implies $u \in C^2$ and $f(u) \in C^1$, so $u$ is of class $C^3$ by using equation \eqref{equazione}.
So assume that $N\ge2$ and that $u$ is locally stable. For any point $x_0 \in \Omega$ pick a ball $B(x_0,r_0) \subset \Omega$ in which $u$ is stable and set $B= B(x_0,r_0)$. 

By Theorem \ref{Prop-approx-crucial} there is a nondecreasing sequence $(f_k)$ of functions in $C^1([0,+\infty)) \cap C^{0,1}([0,+\infty))$ such that $ f_k \nearrow f$ pointwise in $[0; +\infty)$ and a nondecreasing sequence $(u_k)$ of functions in $H^1(\Omega) \cap C^2(\Omega)$ such that $u_k$ is a stable weak solution\footnote{That is a function $u_k$ satisfying $\int_\Omega \nabla u_k \nabla \varphi = \int_\Omega f_k(u_k) \varphi$, for all $ \varphi \in H^1_0(\Omega)$} to \eqref{equazione-k} such that \eqref{convergence} holds. 

Since $ N \leq 9$ an application of  Theorem \ref{th:cfrs} to $u_k$ yields 
 \[
 \Vert u_k \Vert_{C^{\beta}(\overline{B(x_0, \frac{r_0}{2}}))} \leq C \Vert u_k \Vert_{L^1(B)} \leq C \vert B \vert^{\frac{1}{2}} 
 \Vert u_k \Vert_{L^2(B)} \leq C'\Vert u \Vert_{L^2(B)}
 \]
 where $\beta \in (0,1)$ depends only on $N$, while $C>0$ depends on $N$ and $r_0$. 
 In particular, the sequence $(u_k)$ is bounded in $C^{\beta}(\overline{B(x_0, \frac{r_0}{2}))}$ and the Ascoli-Arzel\`a's theorem then implies that a subsequence must converge uniformly to some continuous function $v$ on $\overline{B(x_0, \frac{r_0}{2}))}$. This function must coincide with $u$ on $\overline{B(x_0, \frac{r_0}{2}))}$, since we already know that $u_k \longrightarrow u$ a.e. on $\Omega$. The boundedness of $u$ on $B(x_0, \frac{r_0}{2})$ and standard elliptic theory imply $u \in C^{2,\alpha}_{loc}(B(x_0, \frac{r_0}{2}))$ for every $ \alpha \in (0,1)$. 
 Since $x_0$ is an arbitrary point of $ \Omega$, this concludes the proof. 

  \hfill\qed 
 
 \bigskip
 
 \noindent{\bf Proof of Theorem \ref{th:simmetria-radiale-limitato}.}

1) In is enough to treat the case $ u \not \equiv 0$. We first prove that $u$ is radially symmetric. For $\rho \in O(N)$ we set $ u_\rho(x):= u(\rho x)$, $ x \in B$. Then $u_\rho \in H^1_0(B)$ is a stable solution to \eqref{eq:ball} since $u$ is so. We can therefore apply Theorem \ref{theorem:compa-stable-solutions-nonordered} below to get that $u$ and $u_\rho$ are ordered solutions. 
If $u \not \equiv u_\rho$, an application of Theorem \ref{theorem:compa-stable-solutions} would give $u, u_\rho \in C^{\infty} $ and, either $u < u_\rho $ or $ u > u_\rho$ in $B$. The latter are clearly impossible, since $ u(0) = u_\rho(0)$. Thus, $u\equiv u_\rho$ for any $\rho \in O(N)$, and so $u$ is radially symmetric in $B$. Since $u$ is a radially symmetric member of $H^1_0(B),$ we have that $u \in C^0(\overline{B}\setminus \{0\})$ and by standard elliptic regularity, $u \in C^{2,\alpha}_{loc}((\overline{B}\setminus \{0\})$, for any $\alpha \in (0,1)$. Hence we can write $u(x) = v(r),$ $r = \vert x \vert  \in (0,R],$ and  so $ v \in C^2((0,R])$ is a classical solution to the ode $ - (r^{N-1} v')' =r^{N-1}f(v) $ in $(0,R].$ The latter clearly implies $ v \in C^3((0,R])$ but also that\footnote{In view of \eqref{segno-derivata-v} we could have used the moving planes procedure to get the strict monotonicity of 
$u$ in the radial direction. However, we have chosen to give an elementary proof of this fact, which highlights the role played by the stability assumption on $u$. }
\begin{equation}\label{segno-derivata-v}
v' <0 \quad \text {in} \quad (R-\epsilon, R),
\end{equation}
for some $\epsilon \in (0,R)$. Indeed, if $f(0)\geq0$ the Hopf's lemma yields $ v'(R)<0$ (recall that we are supposing that $ u \not \equiv 0)$ and so \eqref{segno-derivata-v} follows. When $f(0)<0$ we have $ (r^{N-1} v')' = - r^{N-1}f(v) > 0$ in an interval of the form $(R-\epsilon,R),$ thus $r \to  r^{N-1} v'$ is strictly increasing in  $(R-\epsilon, R)$. The latter and the fact that  $v'(R) \leq 0$ (recall that $v \geq 0$ in $(0,R)$ and $v(R)=0$) imply \eqref{segno-derivata-v}. 

To conclude it is enough to prove that $v'<0$ on $(0,R)$ (this also implies that $u>0$ in $B\setminus \{0\}$). 
Suppose not, then 
\begin{equation}\label{def:r0}
r_0 := \inf \{ r \in (0,R) \, : \, v' < 0 \,\, \text{on} \,\, (r,R) \}.
\end{equation}
is well-defined, $r_0$ belongs to $(0,R)$ and $v'(r_0) =0$. We have two cases : either there exists $z \in (0,r_0)$ such that $v'(z)=0,$ or $v'$ has a sign on $(0,r_0)$ (i.e., either $v'<0$ or $v'>0$ on $(0,r_0)$). Let us show that both of them are impossible. In the first case we observe that $w := u_r  $, the radial derivative of $u$, is of class $C^2_0(\overline{A_{z,r_0}})$ and satisfy 
\begin{equation}\label{lin-radial-derivative}
- \Delta w + (N-1) \frac{w}{r^2}= f'(u)w \quad \text{on} \quad A_{z,r_0},
\end{equation}
where $A_{z,r_0} := \{ x \in \R^N \, : \, z < \vert x \vert < r_0 \}.$  We can then multiply \eqref{lin-radial-derivative} by $w$, integrate by parts and find 
\begin{equation}\label{ug-int1}
\int_{A_{z,r_0}} \vert \nabla w \vert^2 - \int_{A_{z,r_0}} f'(u)w^2 = - \int_{A_{z,r_0}} (N-1) \frac{w^2}{r^2}.
\end{equation}
On the other hand $w \in C^2_0(\overline{A_{z,r_0}})$ and $u$ is stable on $A_{z,r_0}$, so $w$ can be used as test function in the stability condition satisfied by $u$ to obtain 
\[
\int_{A_{z,r_0}} \vert \nabla w \vert^2 - \int_{A_{z,r_0}} f'(u)w^2 \geq 0. 
\]
The latter and \eqref{ug-int1} give $w=0$ on $A_{z,r_0}$ and so $ w \leq 0$ in the open annulus $A_{z,R}$. Since 
$w$ solves the linear equation $- \Delta w + ( \frac{(N-1)}{r^2} -f'(u)) w =0 $ on $A_{z,R},$ the strong maximum principle implies $ w \equiv 0$ on $ A_{z,R}$, and so $v' \equiv 0$ on $(z,R),$ which contradicts \eqref{segno-derivata-v}. In the second case, if $v'<0$ on $(0,r_0)$ then $w \leq 0$ on $A_{0,R}$ and, as before, $ w \equiv 0$ on $ A_{0,R}$ by the the strong maximum principle. The latter is again in contradiction with \eqref{segno-derivata-v}. Hence we are left with $v'>0$ on $(0,r_0)$. In this case, the latter and the definition of $r_0$ imply $0 \leq u \leq v(r_0)$ on $B \setminus \{0\},$ so $u \in L^{\infty}(B)$ and then $u \in C^2(\overline B)$ by standard elliptic estimates. To achieve a contradiction, we first observe that $ - \Delta w + (N-1) \frac{w}{r^2}= f'(u)w $
on the annulus $A_{r,r_0} := \{ x \in \R^N \, : \, r < \vert x \vert < r_0 \}$, for every $r \in (0,r_0)$, we then multiply the equation by $w$, integrate by parts to get 
\begin{equation}\label{ug-int2}
\int_{A_{r,r_0}} \vert \nabla w \vert^2 - \int_{A_{r,r_0}} f'(u)w^2 + \int_{A_{r,r_0}} (N-1) \frac{w^2}{r^2} = \int_{\partial A_{r,r_0}} \frac{\partial w}{\partial \nu} w.
\end{equation}
Note that $\int_{\partial A_{r,r_0}} \frac{\partial w}{\partial \nu} w \longrightarrow 0$, as $r \to 0^+$. Indeed, the function
\begin{equation}\label{def:w-tilde}
\tilde w(x): = \begin{cases}
w(x) & \text {if} \quad x \in B \setminus \{0\},\\
0 & \text {if} \quad x=0, 
\end{cases}
\end{equation}
is of class $C^{0,1}(B)$
\footnote{Since $u \in C^2(\overline B)$ is a radial function we have $\nabla u (0)=0,$ $ w := u_r \in C^1(\overline B \setminus \{0\})$ and $ \vert w (x) \vert \le C \vert x \vert$ in $\overline B $, for some constant $C>0$. Then $ \tilde w \in C^0(\overline B)$, $\vert v'(r) \vert \leq  Cr $ in $[0,R]$ and so $\vert v''(r) \vert \leq C_1$ in $(0,R]$, for some constant $C_1>0,$ by using the ode satisfied by $v$. Thus, for any $x,y \in B$ such that $ \vert y \vert > \vert x \vert >0$ we have : $ \vert w(y)-  w(x) \vert = \vert v'(\vert y \vert )- v'(\vert x \vert) \vert \leq \int_{\vert x \vert}^{\vert y \vert} \vert v''(t) \vert \,dt \leq \sup_{\xi \in \left[ \vert x \vert, \vert y \vert \right]} \vert v''(\xi) \vert \left(\vert y \vert - \vert x \vert \right) \leq \sup_{r \in B \setminus \{0\} } \vert v''(r) \vert y-x \vert \leq C_1 \vert y-x \vert $. Then, $\tilde w \in C^{0,1}(B).$}
and therefore $ \vert \int_{\partial A_{r,r_0}} \frac{\partial w}{\partial \nu} w \vert \leq  \int_{S_r} \vert \nabla \tilde w \vert \vert w \vert \leq \Vert \nabla \tilde w \Vert_{L^\infty(B)} \sigma(S_r)v'(r) \longrightarrow 0$ (here $S_r$ is the sphere $ \{ x \in \R^N \, : \, \vert x \vert = r \}$ and  $\sigma(S_r)$ denotes its measure). Also observe that 
\[
\int_{A_{r,r_0}} \vert \nabla w \vert^2 - \int_{A_{r,r_0}} f'(u)w^2 + \int_{A_{r,r_0}} (N-1) \frac{w^2}{r^2} \longrightarrow \int_{B(0,r_0)} \vert \nabla \tilde w \vert^2 - \int_{B(0,r_0)} f'(u) \tilde w^2 + \int_{B(0,r_0)} (N-1) \frac{\tilde w^2}{r^2} 
\]
by monotone and dominated convergence theorems. By gathering together all those information we are
led to 
\begin{equation}\label{ug-int3}
\int_{B(0,r_0)}\vert \nabla \tilde w \vert^2 - \int_{B(0,r_0)}f'(u)\tilde w^2 = - \int_{B(0,r_0)}(N-1) \frac{\tilde w^2}{r^2}.
\end{equation}
Finally, since $u$ is stable on $B$, then $u$ is stable on $B(0,r_0)$ too. Thanks to item i) of Proposition \ref{prop:res-auxiliares}) we can use $w \in C^{0,1}_0(\overline {B(0,r_0)}) \subset H^1_0(B(0,r_0))$ as test function in the stability condition satisfied by $u$ to get $\int_{B(0,r_0)} \vert \nabla \tilde w \vert^2 - \int_{B(0,r_0)} f'(u)\tilde w^2 \geq 0$. As before, the latter and  \eqref{ug-int3} imply that $w \equiv 0$ on $B(0,r_0) \setminus \{0\}$, which is impossible since we are assuming that $v'>0$ in $(0,r_0).$ 

\smallskip

If $ N \leq 9$ and $f \geq 0,$ then $u$ is $C^2$ in a neighborhood of the origin by item (1) of Theorem \ref{smoothness}. This completes the proof of item (1). 

\medskip

2) By combining item (1) and item (2) of Theorem \ref{smoothness} we have that $u \in C^2(\overline{B})$. Then, either $u \equiv 0$ or $ u>0$ in $B$ by the strong maximum principle. The desired  conclusion then follows by a celebrated result of Gidas, Ni and Nirenberg.

\medskip

3) Let $H$ be any hyperplane through the origin and let $ \rho=\rho(H) \in O(N)$ be the corresponding reflection with respect to $H$.  Set $ u_\rho(x):= u(\rho x)$, $ x \in \Omega$. Then $u_\rho \in H^1_0(\Omega)$ is a stable solution to \eqref{eq:anello} since $u$ is so. By Theorem \ref{theorem:compa-stable-solutions-nonordered}, $u$ and $u_\rho$ are ordered solutions. If $u \not \equiv u_\rho$, an application of Theorem \ref{theorem:compa-stable-solutions} would give $u, u_\rho \in C^{\infty} $ and, either $u < u_\rho $ or $ u > u_\rho$ in $\Omega$. The latter are clearly impossible, since $ u=u_\rho$ on the hyperplane $H$. Thus, $u\equiv u_\rho$ and so $u$ is radially symmetric in $\Omega$, since $H$ is arbitrary. Since $u$ is a radially symmetric member of $H^1_0(\Omega),$ we have that $u \in C^0(\overline {\Omega})$ and standard elliptic theory imply 
$u \in C^{2,\alpha}_{loc}(\overline{\Omega})$, for any $\alpha \in (0,1)$. The required regularity then follows, as in item (1), by analysing the ode satisfied by the radial profile of $u$. \hfill\qed

\section{Some auxiliary results} \label{S3}

In this section we prove some results for stable solutions in $H^1$. These results have been used in the proofs of our main results and some of them are of independent interest.
In what follows, when $\Omega$ is bounded, we shall denote by $ \lambda_1 = \lambda_1(\Omega)>0 $ the principal eigenvalue of $ -\Delta$ with homogeneous Dirichlet boundary conditions.  
\begin{lemma} \label{lemma lambda1}
	Let $\Omega$ be a bounded domain of $\R^N$, $N \geq 1$ and let $f \in C^1(\R)$ such that $f(t) > \lambda_1 t$ for $t>0$. Assume that $u\in H^1(\Omega)$ and  $f(u) \in L^1_{loc}(\Omega)$.
	
 If $u$ solves 
	\begin{equation}
	\left\{
	\begin{aligned}
	-\Delta u& \geq f(u) \qquad\text{in $\mathcal D'(\Omega)$}\\
	u&\geq 0 \qquad\text{a.e. on $\Omega$},
	\end{aligned}
	\right.
	\end{equation}
    then $u \equiv 0$, $f(0)=0$ and $u$ is a solution to 
\begin{equation}\label{sol-non-stabile} 
-\Delta u = f(u) \qquad\text{in $\mathcal D'(\Omega)$}
\end{equation}
which is not stable.
\end{lemma}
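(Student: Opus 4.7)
The plan is to test the supersolution inequality $-\Delta u \geq f(u)$ against the positive principal Dirichlet eigenfunction $\phi_1 \in H^1_0(\Omega)$ and then exploit the strict gap $f(t) > \lambda_1 t$ for $t > 0$. First, by continuity of $f$, the hypothesis yields $f(0) = \lim_{t \to 0^+} f(t) \geq 0$, so that $f(u) \geq \lambda_1 u \geq 0$ a.e.\ in $\Omega$, with strict inequality on $\{u > 0\}$.

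Since $\phi_1$ is not compactly supported in $\Omega$, I would approximate it monotonically from below by a sequence $\psi_n \in C^\infty_c(\Omega)$ of nonnegative functions with $\psi_n \to \phi_1$ in $H^1_0(\Omega)$ and $\psi_n \nearrow \phi_1$ a.e.\ (for instance, mollifications of $(\phi_1 - 1/n)_+$). Testing the distributional inequality against $\psi_n$ and passing to the limit --- the left-hand side $\int \nabla u \cdot \nabla \psi_n$ converges by $H^1$-convergence and $u \in H^1(\Omega)$, while the right-hand side $\int f(u) \psi_n$ converges by monotone convergence using $f(u) \geq 0$ --- gives
\[
\int_\Omega \nabla u \cdot \nabla \phi_1 \, dx \geq \int_\Omega f(u)\phi_1 \, dx.
\]

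To evaluate the left-hand side, I would write $u = u_0 + h$ with $u_0 \in H^1_0(\Omega)$ and $h \in H^1(\Omega)$ the unique harmonic function satisfying $h - u \in H^1_0(\Omega)$. The eigenvalue equation gives $\int \nabla u_0 \cdot \nabla \phi_1 = \lambda_1 \int u_0 \phi_1$, while the harmonicity of $h$ tested against $\phi_1 \in H^1_0(\Omega)$ gives $\int \nabla h \cdot \nabla \phi_1 = 0$. The lattice bound $h^- \leq u_0^+ \in H^1_0(\Omega)$ places $h^-$ in $H^1_0(\Omega)$, so the weak maximum principle (test $-\Delta h = 0$ against $h^-$) yields $h \geq 0$ a.e. Hence
\[
\int_\Omega \nabla u \cdot \nabla \phi_1 \, dx = \lambda_1 \int_\Omega u \phi_1 \, dx - \lambda_1 \int_\Omega h\phi_1 \, dx \leq \lambda_1 \int_\Omega u \phi_1 \, dx.
\]
Combining, $\int_\Omega (f(u)-\lambda_1 u)\phi_1 \, dx \leq 0$; since $\phi_1 > 0$ and $f(u)-\lambda_1 u \geq 0$ a.e., $f(u) = \lambda_1 u$ a.e. The strict inequality $f(t) > \lambda_1 t$ for $t > 0$ forces $u \equiv 0$, and substituting into $-\Delta u = f(u)$ gives $f(0) = 0$. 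The non-stability of $u \equiv 0$ then follows from the strict gap $f(t) > \lambda_1 t$, which (using $f(0)=0$ and $f \in C^1$) encodes $f'(0) > \lambda_1$, contradicting the stability condition $f'(0) \leq \lambda_1$ that would otherwise hold at $u \equiv 0$.

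The main obstacle is that $u \notin H^1_0(\Omega)$ in general, so that $\phi_1$ cannot be used as a test function via Green's identity directly; the detour through the decomposition $u = u_0 + h$ and the sign of the harmonic remainder $h$, obtained via the weak maximum principle applied to $h^-$, is what makes the argument go through.
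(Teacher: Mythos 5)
Your argument is correct and takes essentially the same route as the paper: both proofs test the supersolution inequality against $\phi_1$ (via nonnegative compactly supported approximations, Fatou in the paper versus monotone convergence in yours) and handle the nonzero boundary trace through the harmonic lift $h$ with $u-h\in H^1_0(\Omega)$ and $h\ge 0$. The only structural difference is that the paper first invokes the strong maximum principle to reduce to the dichotomy $u\equiv 0$ or $u>0$ a.e.\ and then reaches a strict contradiction, whereas you conclude $f(u)=\lambda_1 u$ a.e.\ and let the pointwise strict gap force $u\equiv 0$ directly, which is if anything a touch more economical. One caveat, which you share verbatim with the paper: $f(t)>\lambda_1 t$ for $t>0$ together with $f(0)=0$ only yields $f'(0)\ge\lambda_1$ (consider $f(t)=\lambda_1 t+t^3$, for which $u\equiv 0$ \emph{is} stable), so the final ``not stable'' assertion genuinely requires the extra hypothesis $f'(0)>\lambda_1$, which does hold in the convex setting where the lemma is applied; this is an imprecision of the statement rather than of your write-up.
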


\begin{proof} By the strong maximum principle (for superharmonic functions in $H^1$) either $u \equiv 0$ or $ u >0$ a.e. on 
	$\Omega$. Let us prove that the latter is impossible. To this end, let us suppose that $u>0$ and let $ h \in H^1(\Omega)$ be the unique weak solution to 
	\begin{equation}\label{harmonica}
	\left\{
	\begin{aligned}
	-&\Delta h = 0 \quad\text{in $\Omega$,}\\
	& u-h \in H^1_0(\Omega).
	\end{aligned}
	\right.
	\end{equation}
Then $ 0 \leq h \leq u $ a.e. on $\Omega$ by the maximum principle, and $ h < u$ a.e. on $\Omega$ by the strong maximum principle. Indeed, $h=u$ would imply $ 0 = - \Delta(u-h) =f(u) > \lambda_1 u >0$ a.e. on $\Omega$, a contradiction. 

Let $\phi_1 \in H^1_0(\Omega)$ be a positive function associated to the eigenvalue $\lambda_1$. Since $f(u) \geq 0$ a.e. on $\Omega$, a standard density argument and Fatou's Lemma imply 
	
	\[
	\int_{\Omega} \nabla u \nabla \varphi \geq  \int_{\Omega} f(u) \varphi \qquad \forall \varphi \in H^1_0(\Omega), \quad \varphi \geq 0 \,\, \text {a.e. in} \,\, \Omega,
	\]
	and therefore we can use $\phi_1$ in the latter to get
	\[
	\int_{\Omega} \nabla u \nabla \phi_1 \geq  \int_{\Omega} f(u) \phi_1. 
	\]
	Then,
	\[
	\int_{\Omega} \lambda_1 u \phi_1  <  \int_{\Omega} f(u) \phi_1 \leq \int_{\Omega} \nabla u \nabla \phi_1 = \int_{\Omega} \nabla (u-h) \nabla \phi_1 + \int_{\Omega} \nabla h \nabla \phi_1 = \int_{\Omega} \nabla (u-h) \nabla \phi_1 +0
	\]
	where we have used the property of $f$, as well as the fact that $h$ solves \eqref{harmonica}.
	In addition, by definition of $\phi_1$ and $u-h \in H^1_0(\Omega)$, we also have $ \int_{\Omega} \nabla (u-h) \nabla \phi_1 = \int_{\Omega} \lambda_1 (u-h) \phi_1$, which leads to 
	\[
	\int_{\Omega} \lambda_1 u \phi_1  <  \int_{\Omega} f(u) \phi_1 \leq \int_{\Omega} \nabla u \nabla \phi_1 = \int_{\Omega} \nabla (u-h) \nabla \phi_1 = \int_{\Omega} \lambda_1 (u-h) \phi_1 \leq \int_{\Omega} \lambda_1 u \phi_1 
	\]
	a contradiction. Therefore, $u \equiv 0$ and $f(0) \leq -\Delta u =0$. The latter and the assumption on $f$ give $f(0)=0$ and so $u$ solves \eqref{sol-non-stabile}. 
	 Since $f'(0) > \lambda_1 = \inf \, 
	\{ \frac{\int_{\Omega} \vert \nabla u \vert^2}{\int_{\Omega} u^2} \, : \, u \in H^1_0(\Omega), u \not \equiv 0 \} $ we immediately see that $u=0$ cannot be a stable solution to \eqref{sol-non-stabile}.  
	
\hfill\qed

\end{proof}

\begin{proposition}\label{prop:res-auxiliares}Let $\Omega$ be an open set of $\R^N, N \geq 1,$ and let $f \in C^1([0;+\infty))$ be a convex function.
	
\noindent i) Let $u \in L^1_{loc}(\Omega)$ such that $u \geq 0 $ a.e. in $ \Omega$ and
\begin{equation}\label{hyp-stab}
f'(u) \in  L^1_{loc}(\Omega), \qquad \int_{\Omega} f'(u) \phi^2 \leq  \int_{\Omega} \vert \nabla \phi \vert^2
\qquad \qquad \forall \phi \in C^{\infty}_c(\Omega). 
\end{equation}
Then 
\begin{equation}\label{stab-migliorata}
f'(u) \varphi^2 \in L^1(\Omega), \qquad  \int_{\Omega} f'(u) \varphi^2 \leq  \int_{\Omega} \vert \nabla \varphi \vert^2
\qquad \qquad \forall \varphi \in H^1_0(\Omega).
\end{equation}	

\medskip

\noindent ii) Let $u,v \in H^1(\Omega)$ such that $f(u),f(v) \in  L^1_{loc}(\Omega)$, $(u-v)^+ \in H^1_0(\Omega)$ and $ 0 \leq v, \,\, 0  \leq u$ a.e. in $ \Omega$. Also assume that $u$ satisfies \eqref{hyp-stab}. Then, $(f(u) -f(v))(u-v)^+ \in L^1(\Omega).$

\medskip

\noindent iii) Let $u,v \in L^1_{loc}(\Omega)$ such that $f(u),f(v) \in  L^1_{loc}(\Omega)$ and $ 0 \leq v \leq u$ a.e. in $ \Omega$. 

\noindent Then, $f'(v) (u-v) \in L^1_{loc}(\Omega).$ 
\end{proposition}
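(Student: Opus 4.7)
The plan is to address the three items in order: (i) is the main substance, while (ii) and (iii) follow as convexity-sandwich consequences.

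For (i), the aim is a density passage from $C^\infty_c(\Omega)$ to $H^1_0(\Omega)$. The obstacle is that $f'(u)$ may change sign, so one cannot simply pass to limits in the stability inequality. The decisive observation is that, since $f$ is convex, $f'$ is nondecreasing, and since $u \geq 0$ a.e., we have $f'(u) \geq f'(0)$ a.e.; hence the negative part $(f'(u))^-$ is bounded by the constant $c := \max(0,-f'(0))$. Rewrite the stability assumption, for $\phi \in C^\infty_c(\Omega)$, as
\[
\int_\Omega (f'(u))^+ \phi^2 \;\leq\; \int_\Omega \vert \nabla\phi\vert^2 \,+\, \int_\Omega (f'(u))^- \phi^2.
\]
Given $\varphi \in H^1_0(\Omega)$, select $\phi_n \in C^\infty_c(\Omega)$ with $\phi_n \to \varphi$ in $H^1(\Omega)$ and, passing to a subsequence, also a.e. Since $(f'(u))^-$ is bounded and $\phi_n^2 \to \varphi^2$ in $L^1(\Omega)$ (by Cauchy--Schwarz applied to $\phi_n^2 - \varphi^2 = (\phi_n-\varphi)(\phi_n+\varphi)$), dominated convergence handles the right-hand side, while Fatou's lemma applied to the nonnegative sequence $(f'(u))^+ \phi_n^2$ controls the left-hand side. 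This yields $(f'(u))^+ \varphi^2 \in L^1(\Omega)$ together with
\[
\int_\Omega (f'(u))^+ \varphi^2 \;\leq\; \int_\Omega \vert \nabla \varphi\vert^2 \,+\, \int_\Omega (f'(u))^- \varphi^2,
\]
whence $f'(u)\varphi^2 \in L^1(\Omega)$ and \eqref{stab-migliorata} follows after regrouping.

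For (ii), I would exploit the two-sided convexity inequality
\[
f'(v)(u-v)\;\leq\; f(u)-f(v)\;\leq\; f'(u)(u-v) \qquad \text{on } \{u \geq v\},
\]
which follows from $f(u)-f(v) = \int_v^u f'(s)\,ds$ and the monotonicity of $f'$. Multiplying by the nonnegative quantity $(u-v)^+$ (and noting the product vanishes on $\{u<v\}$) yields
\[
f'(v)\bigl((u-v)^+\bigr)^2 \;\leq\; (f(u)-f(v))(u-v)^+ \;\leq\; f'(u)\bigl((u-v)^+\bigr)^2.
\]
The upper bound lies in $L^1(\Omega)$ by (i) applied with $\varphi := (u-v)^+ \in H^1_0(\Omega)$. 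The lower bound, using $f'(v) \geq f'(0)$, dominates $f'(0)\bigl((u-v)^+\bigr)^2$, which lies in $L^1(\Omega)$ because $(u-v)^+ \in L^2(\Omega)$. A squeeze gives $(f(u)-f(v))(u-v)^+ \in L^1(\Omega)$.

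For (iii), the analogous sandwich uses only pointwise information. From $0 \leq v \leq u$ and the monotonicity of $f'$,
\[
f'(0)(u-v)\;\leq\; f'(v)(u-v)\;\leq\; f(u)-f(v),
\]
the right inequality being the tangent-line bound $f(u) \geq f(v)+f'(v)(u-v)$. Both extremes belong to $L^1_{loc}(\Omega)$ by hypothesis, so $f'(v)(u-v) \in L^1_{loc}(\Omega)$. The main difficulty of the whole proposition is really the Fatou step in (i); once that is settled, (ii) and (iii) are essentially immediate from convexity.
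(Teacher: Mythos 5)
Your proposal is correct and follows essentially the same route as the paper: for (i) the same splitting of $f'(u)$ into positive and negative parts using $f'(u)\geq f'(0)$, followed by Fatou on the left and convergence of the bounded negative-part term on the right; for (ii) and (iii) the same convexity sandwiches $f'(v)(u-v)^+\leq (f(u)-f(v))\,(u-v)^+/(u-v)\cdot(u-v)^+\leq f'(u)(u-v)^+$ and $f'(0)(u-v)\leq f'(v)(u-v)\leq f(u)-f(v)$. No gaps.
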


\begin{proof}
	
i) 	By convexity of $f$ and \eqref{hyp-stab} we have 
\begin{equation}\label{stab-negativa}
f'(0) \leq f'(u) \qquad \text {a.e. in} \,\, \Omega
\end{equation}
and
\begin{equation}\label{stab-spezzata}
\int_{\Omega} [f'(u)]^+ \phi^2 \leq \int_\Omega \vert \nabla \phi \vert^2 + \int_{\Omega} [f'(u)]^- \phi^2 \qquad \qquad \forall \phi \in C^{\infty}_c(\Omega).
\end{equation}
From  \eqref{stab-negativa} we deduce that $[f'(u)]^-  \leq  [f'(0)]^- $ and so $[f'(u)]^- \in L^{\infty}(\Omega)$ and $ [f'(u)]^- \varphi^2 \in L^1(\Omega)$ for any $\varphi \in H^1_0(\Omega)$. Now, for any $\varphi \in H^1_0(\Omega),$
let $ (\phi_n)$ be a sequence of functions in $ C^{\infty}_c(\Omega)$ such that $ \phi_n \longrightarrow
 \varphi$ in $H^1_0(\Omega)$ and a.e. in $\Omega$.  Using $\phi=\phi_n$ in \eqref{stab-spezzata} and Fatou's Lemma we immediately get that \eqref{stab-spezzata} holds true for any $ \varphi \in H^1_0(\Omega)$. In particular, $ [f'(u)]^+ \varphi^2 \in L^1(\Omega)$ and the two claims of \eqref{stab-migliorata} follow. 

ii) The convexity of $f$ and $(u-v)^+ \geq 0$ a.e. on $\Omega$ imply 
\begin{equation}\label{disug-conv-migliorata2}
f'(0)[(u-v)^+]^2 \leq f'(v)[(u-v)^+]^2 \leq (f(u) -f(v))(u-v)^+ \leq f'(u)[(u-v)^+]^2 \qquad \text{a.e. on} \quad\Omega 
\end{equation}
\noindent and so 
\begin{equation}\label{buona-int}
(f(u) -f(v))(u-v)^+ \in L^1(\Omega)
\end{equation}
thanks to $(u-v)^+ \in H^1_0(\Omega)$ and item i).

iii) By convexity of $f$ we have 
\[
f(u)-f(v) \geq f'(v)(u-v) \geq f'(0)(u-v)
\]
which implies the desired conclusion.

\hfill\qed

\end{proof}

\begin{theorem} 
	Let $\Omega$ be a bounded domain of $\R^N$, $N \geq 1$ and let $f \in C^1([0;+\infty))$ be a convex function. Assume that $u,v \in H^1(\Omega)$ satisfy $ u-v \in H^1_0(\Omega),$ $0 \leq v \leq u $ a.e. on $\Omega,$ $f(u),f(v) \in L^1_{loc}(\Omega)$ and both $u$ and $v$ are solution to 
	\begin{equation}\label{eq-D1}
	-\Delta w = f(w)\qquad\text{in $\mathcal D'(\Omega)$}.
	\end{equation}
	If $f'(u) \in L^1_{loc}(\Omega)$ and $u$ is stable, then either $u\equiv v$ or $f(t) = a + \lambda_1 t$ for all $t \in (\inf_{\Omega} v, \, \sup_{\Omega} u)$ and some\footnote{Actually the real number $a$ is unique and its value is given by $- \lambda_1\frac{\int_\Omega \phi_1 h}{\int_\Omega \phi_1} \leq 0, $ where $\phi_1$ is a positive first eigenfunction of $ -\Delta$ with homogeneous Dirichlet boundary conditions and $h \in H^1(\Omega)$ is the unique weak solution of $ - \Delta h =0$ in $ \Omega$ with $ u-h \in H^1_0(\Omega)$ (to see this, use $\phi_1$ as test function in the weak formulation of $-\Delta u = a + \lambda_1 u$ and the fact that $h$ is harmonic) and also note that $h$ is nonnegative by the maximum principle. In particular, $u,v \in H^1_0(\Omega) \Longleftrightarrow a=0$. Also note that, for every $ a \leq 0$ there exist solutions $u,v$ for which the second alternative of the theorem occurs. Indeed, the functions $u_t := - \frac{a}{\lambda_1} + t \phi_1$, $ t \geq 0$ are suitable.}
	$a \in \R,$ $u,v \in C^\infty(\Omega)$ and $u-v$ is a positive first eigenfunction of $ -\Delta$ with homogeneous Dirichlet boundary conditions.   
\end{theorem}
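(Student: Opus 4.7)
My plan is to build the proof around the single test function $w := u - v$. By hypothesis $w \in H^1_0(\Omega)$ with $w \ge 0$ a.e., and since both $u$ and $v$ solve \eqref{eq-D1}, $w$ satisfies $-\Delta w = f(u) - f(v)$ in $\mathcal D'(\Omega)$. Convexity of $f$ furnishes the pointwise sandwich
\[
0 \;\le\; f(u) - f(v) \;\le\; f'(u)(u-v) \;=\; f'(u)\,w \qquad \text{a.e. in } \Omega.
\]
Thanks to Proposition \ref{prop:res-auxiliares}(ii) the product $(f(u) - f(v))\,w$ lies in $L^1(\Omega)$, so $w$ is an admissible test function for the equation it satisfies. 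Combining with Proposition \ref{prop:res-auxiliares}(i), which upgrades the stability inequality to arbitrary $H^1_0$ test functions, this produces the chain
\[
\int_\Omega |\nabla w|^2 \;=\; \int_\Omega (f(u) - f(v))\,w \;\le\; \int_\Omega f'(u)\,w^2 \;\le\; \int_\Omega |\nabla w|^2,
\]
so every inequality above is an equality.

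Two structural conclusions follow. First, from the pointwise sandwich and equality of the two leftmost integrals one deduces $f(u) - f(v) = f'(u)(u-v)$ a.e., which by convexity forces $f$ to be affine on $[v(x), u(x)]$ with slope $f'(u(x))$ for a.e. $x \in \Omega$. Second, $w$ saturates the stability inequality, hence minimizes the Rayleigh quotient for the quadratic form $\int |\nabla \phi|^2 - \int f'(u)\phi^2$ on $H^1_0(\Omega) \setminus \{0\}$ (non-negative by stability), so $w$ is a first Dirichlet eigenfunction of $-\Delta - f'(u)\cdot$ at eigenvalue $0$: namely $-\Delta w = f'(u)\,w$ in $\mathcal D'(\Omega)$.

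If $u \equiv v$ we are done. Otherwise $w \not\equiv 0$, and a strong maximum principle applied to the eigenvalue equation yields $w > 0$ in the connected domain $\Omega$. I would then upgrade the affinity of $f$ from each individual $[v(x), u(x)]$ to the interval $I := (\inf_\Omega v,\,\sup_\Omega u)$. Fix $t \in I$; the sets $\{v < t\}$ and $\{u > t\}$ have positive measure by definition of essential inf/sup. If $|\{v < t < u\}| = 0$ then, since $\{v \ge t\} \cap \{u \le t\} \subset \{u = v\} = \{w = 0\}$ has measure zero, $\Omega$ would split, up to a null set, into the essentially disjoint positive-measure pieces $\{v \ge t\}$ and $\{u \le t\}$; continuity of $u$ and $v$—obtained by bootstrapping elliptic regularity (once $f$ is known to be locally affine, the equation becomes linear and $u, v \in C^\infty$)—then contradicts the connectedness of $\Omega$. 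Hence $|\{v < t < u\}| > 0$ and, for a.e. such $x$, the interval $[v(x), u(x)]$ contains $t$ in its interior and $f$ is affine there; as $t$ was arbitrary, $f$ is affine in a neighbourhood of every point of the connected interval $I$, so $f(t) = a + \lambda t$ on $I$ for some $a \in \R$ and $\lambda \ge 0$. The resulting linear identity $-\Delta w = \lambda w$ with $w > 0$ in $\Omega$ and $w \in H^1_0(\Omega)$ forces $\lambda = \lambda_1$ and $w$ to be a positive principal Dirichlet eigenfunction; standard elliptic bootstrap applied to the linear equation $-\Delta u = a + \lambda_1 u$ then yields $u, v \in C^\infty(\Omega)$.

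The step I expect to be the main obstacle is precisely this globalization from the pointwise-per-$x$ affinity of $f$ on $[v(x), u(x)]$ to genuine affinity on the connected interval $I$. It rests on the strict positivity $w > 0$ and on enough continuity of $u$ and $v$ to exploit the connectedness of $\Omega$—a delicate matter in this distributional framework, where a priori only $f'(u) \in L^1_{loc}(\Omega)$.
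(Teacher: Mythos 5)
Your overall strategy coincides with the paper's: establish the energy identity $\int_\Omega|\nabla w|^2=\int_\Omega(f(u)-f(v))w$ for $w=u-v$, saturate the stability inequality to get $f(u)-f(v)=f'(u)(u-v)$ a.e., conclude that $f$ is affine on each interval $(v(x),u(x))$, and then globalize to $(\inf_\Omega v,\sup_\Omega u)$. However, the globalization step — which you yourself flag as the main obstacle — contains a genuine gap. You deduce $\vert\{v<t<u\}\vert>0$ by a connectedness argument that requires $u$ and $v$ to be continuous, and you propose to obtain continuity ``by bootstrapping elliptic regularity (once $f$ is known to be locally affine, the equation becomes linear)''. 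This is circular: at that stage of the argument $f$ is only known to be affine on the a.e.\ intervals $(v(x),u(x))$, not on the essential range of $u$, so the equation is not yet linear and no bootstrap is available. Nor can continuity be obtained by other means in this generality: stable $H^1_0$ distributional solutions with $f'(u)\in L^1_{loc}$ may be unbounded and discontinuous (e.g.\ $u=-2\ln|x|$ for $f(u)=2(N-2)e^u$, $N\ge10$), so any argument that presupposes pointwise values of $u$ and $v$ breaks down. The paper avoids continuity entirely: it exhausts $\Omega$ by connected open sets $\omega_m\subset\subset\Omega$, uses the strong maximum principle (with $[f'(v)]^-\in L^\infty$ by convexity) to get a uniform lower bound $u-v\ge c(m)>0$ on $\omega_m$, and then invokes the intermediate value theorem for Sobolev functions of Chabi--Haraux \cite{C-Haraux} to produce points where $v$ lies just below the endpoint of the maximal affine interval while $u$ exceeds it by $c(m)/2$, extending the affine interval and yielding a contradiction with maximality. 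Some substitute for this measure-theoretic device is indispensable.

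Two smaller points. First, the assertion that $w$ ``is an admissible test function for the equation it satisfies'' because $(f(u)-f(v))w\in L^1(\Omega)$ is the conclusion of a nontrivial approximation (the paper truncates $w$ at level $n$ and then takes $\min\{u-v,\rho_n\}$ with $\rho_n\in C^\infty_c$); you should at least sketch why the identity survives the passage to the limit. Second, your derivation of $w>0$ via the Euler--Lagrange equation $-\Delta w=f'(u)w$ of the saturated stability functional is legitimate (it is how the paper proves its non-ordered comparison theorem), though for the ordered case the simpler route $-\Delta w=f(u)-f(v)\ge f'(v)w$ with $[f'(v)]^-\in L^\infty$ suffices and sidesteps justifying the Euler--Lagrange equation in $\mathcal D'(\Omega)$.
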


\medskip



\begin{proof} 

By item ii) of Proposition \ref{prop:res-auxiliares} we have 	
\begin{equation}\label{buona-int-2}
(f(u) -f(v))(u-v) \in L^1(\Omega),
\end{equation}	
note that $ u-v = (u-v)^+$ since $v \leq u$ a.e. in $\Omega$. 
	
We claim that 
	\begin{equation}\label{equazione-migliorata}
	\int_{\Omega} \vert \nabla (u-v) \vert^2 =  \int_{\Omega} (f(u)-f(v))(u-v).
	\end{equation}
	To this end, recall that 
	\begin{equation}\label{equazione-approx1}
	\int_{\Omega} \nabla (u-v) \nabla \varphi  =  \int_{\Omega} (f(u)-f(v))\varphi \qquad \forall \varphi \in C^{\infty}_c(\Omega)
	\end{equation}
	by assumption. Then, since $ f(u)-f(v) \in L^1_{loc}(\Omega)$, a standard approximation argument and Lebesgue's dominated convergence theorem yield that \eqref{equazione-approx1} holds true for any $\varphi \in H^1(\Omega) \cap L^{\infty}(\Omega)$ with compact support. Now we prove that \eqref{equazione-approx1} holds true for any 
	$\varphi \in H^1(\Omega)$ with compact support and such that $ 0 \leq \varphi \leq u-v$ a.e. on $\Omega$.  Indeed, we can apply \eqref{equazione-approx1} with $\varphi_n = T_n(\varphi)$, where $T_n = T_n(t)$ denotes the truncation function at level $n \geq 1,$ and write
	\begin{equation}\label{equazione-approx2}
	\int_{\Omega} \nabla (u-v) \nabla \varphi_n  =  \int_{\Omega} (f(u)-f(v))\varphi_n, \qquad \forall n \geq 1.
	\end{equation}	
	Since $ \varphi_n \to \varphi$ in $ H^1_0(\Omega)$ and a.e. on $\Omega,$ and \eqref{buona-int-2} is in force, an application of Lebesgue's dominated convergence theorem we get
	\begin{equation}\label{equazione-approx2}
	\int_{\Omega} \nabla (u-v) \nabla \varphi  =  \int_{\Omega} (f(u)-f(v))\varphi \qquad \forall \varphi \in H^1(\Omega), \quad supp(\varphi) \subset \subset \Omega, \quad 0 \leq \varphi \leq u-v \quad \text {a.e. in} \,\, \Omega.
	\end{equation}
	To conclude the proof of \eqref{equazione-migliorata} we proceed as follows. Let  $(\rho_n)$ be a sequence of nonnegative functions in $C^{\infty}_c(\Omega)$ such that $\rho_n \to u-v $ in $ H^1_0(\Omega)$ and a.e. on $\Omega$ and set $ w_n := \min \{ u-v, \rho_n \}$. Then, $ w_n \in H^1(\Omega)$, $supp(w_n) \subset supp(\rho_n)  \subset \subset \Omega $ and $ 0 \leq w_n \leq u-v$ a.e. on $\Omega$.  We can therefore apply \eqref{equazione-approx2} with $\varphi = w_n$ to get the desired conclusion \eqref{equazione-migliorata}, since 
	$ w_n \to u-v$ in $ H^1_0(\Omega)$ and a.e. on $\Omega,$ and \eqref{buona-int-2} is in force.

	By convexity of $f$ we have $ f'(0) \leq f'(v) \leq f'(u)$ a.e. on $ \Omega$, hence $f'(v) \in L^1_{loc}(\Omega)$ and $ [f'(v)]^- \leq [f'(0)]^-$a.e. in $\Omega$. Also observe that $ - \Delta (u-v) = f(u) -f(v) \geq f'(v)(u-v)$ in $\mathcal D'(\Omega)$ and so also $- \Delta (u-v) + [f'(v)]^-(u-v) \geq [f'(v)]^+(u-v) \geq 0$ in $\mathcal D'(\Omega),$ since $u-v \in H^1_0(\Omega)$ is nonnegative a.e. in $ \Omega$. 
	The strong maximum principle (see e.g. Theorem 8.19 in \cite{GiTr}) yields either $ u \equiv v,$ and we are done, or $ u > v $ a.e. on $\Omega$. In the remaining part of the proof we assume that the latter possibility is in force and we set $I := \inf_{\Omega} v, \, S := \sup_{\Omega}u$. 
	
	Now we combine \eqref{equazione-migliorata} with \eqref{stab-migliorata} with $ \varphi = u-v$ to get
	\[
	\int_{\Omega} \big( f(u)-f(v) - f'(u)(u-v) \big)(u-v) \geq 0. 
	\]
	By convexity of $f$, the integrand in the above inequality is nonpositive and so
	\[
	\big( f(u)-f(v) - f'(u)(u-v) \big)(u-v) = 0 \qquad \text {a.e. in} \,\, \Omega
	\]
	and then
	\[
	f(u)-f(v) - f'(u)(u-v) = 0 \qquad \text {a.e. in} \,\, \Omega
	\]
since $ u>v$ a.e. on $\Omega$. The latter implies that, for almost every $x \in \Omega$, the function $f$ must be affine on the open interval $(v(x), u(x))$.  Now we prove that $f$ is an affine function on the the interval $(I,S)$. To this end we first consider the case in which one of the two solutions is a constant function. If $v$ is constant, say $ v \equiv c $, then $ I = c $ and for almost every $ x \in \Omega $ the function $f$ is affine on the open interval $ (I, u(x))$. Since any two intervals of this form intersect and $f$ is affine on each of them, we see that $f$ must be the same affine function on both intervals. This implies that $ f $ is an affine function over the entire interval $ (I, S),$ as claimed. The same argument applies if $u$ is constant. It remains to consider the case where neither $v$ nor $u$ are constant. Since $v$ is not constant we can find $x_0 \in \Omega$ such that 

\begin{equation}\label{proprieta-x_0}
\left\{
\begin{aligned}
\quad & v(x_0) < u(x_0) \quad \text{and $ \, \, f(t) = a + b t \,\,$ for all $t \in (v(x_0), u(x_0)),$} \\
& \vert \{ \, v > v(x_0) \, \} \vert >0,\\
& \vert \{ \, v \leq v(x_0) \, \} \vert >0,
\end{aligned}
\right.
\end{equation}
where we denoted by $ \vert X \vert$ the Lebesgue measure of any  measurable set $X \subset \Omega$.  

Let $(\alpha, \beta)$ be the largest open interval containing $(v(x_0), u(x_0))$ and such that
$f(t) = a + b t$ for all $t \in (\alpha, \beta)$. To conclude the proof it is enough to show that $(I,S) \subset (\alpha, \beta)$. Let us first prove that $ I \geq \alpha$. Assume to the contrary that $ I < \alpha,$ then the latter and \eqref{proprieta-x_0} give $I < \alpha \leq v(x_0) < \sup_{\Omega} v$ and so we can find an integer $m \geq 1$ such that 
\begin{equation}\label{Omega-rho}
I \leq \inf_{\omega_m} v < \alpha < \sup_{\omega_m} v
\end{equation}
where $\{ \omega_m, m \geq 1 \} $ is a countable family of open connected sets of $\R^N$ such $\omega_m \subset \omega_{m+1} \subset \subset \Omega$ and $ \Omega = \cup_{m \geq 1} \omega_m $ (such a family of open  connected sets exists thanks to the fact that $\Omega$ is open, bounded and connected). Recall that $u-v \in H^1_0(\Omega)$ is positive a.e on $\Omega$ and satisfies $- \Delta (u-v) + [f'(v)]^-(u-v) \geq [f'(v)]^+(u-v) \geq 0$ in $\mathcal D'(\Omega),$ where $ [f'(v)]^- \in L^{\infty}(\Omega)$, then the strong maximum principle yields 
\begin{equation}\label{controllo-armonico}
u - v \geq  c(m)>0 \quad \text {a.e. on $\omega_m$.}
\end{equation}

Now, if we set $ \epsilon := \min \{ \,  \frac{c(m)}{2}, \alpha - \inf_{\omega_m} v\, \} >0$, we have $I \leq \inf_{\omega_m} v < \alpha - \epsilon < \alpha < \sup_{\omega_m} v $ and so 
\begin{equation}\label{controllo-livelli}
\left\{
\begin{aligned}
\quad & \vert \{ \, v \leq \alpha - \epsilon \, \}  \cap \omega_m \vert >0,\\
& \vert \{ \, v \geq \alpha - \frac{\epsilon}{2} \} \cap \omega_m \vert >0.
\end{aligned}
\right.
\end{equation}

Since $\omega_m$ is open and connected we can use the "intermediate value theorem" for functions in a Sobolev space (see e.g. Th\'eor\`eme 1 of \cite{C-Haraux}) to get 
$\vert \{ \, \alpha - \epsilon \leq v \leq \alpha - \frac{\epsilon}{2} \}\, \}  \cap \omega_m \vert >0.$ This fact and $ \eqref{controllo-armonico}$ prove the existence of $x_\epsilon \in \omega_m$ such that $\alpha - \epsilon \leq v(x_\epsilon) \leq \alpha - \frac{\epsilon}{2}$ and $ u(x_\epsilon)-v(x_\epsilon) \geq c(m)$, whose combination leads to $ u(x_\epsilon) \geq \alpha + \frac{c(m)}{2}  $ and $ v(x_\epsilon) < \alpha.$ Therefore, $f$ must satisfy $f(t) = a + b t $ for all $t \in (v(x_\epsilon), \beta),$ which contradicts the definition of $(\alpha, \beta)$. This proves that $ I \geq \alpha$. Now we show that $ S \leq \beta$. If $ \beta = + \infty$ we are done, so we suppose that $ \beta \in \R$. In this case we observe that $ \inf_\Omega u < \beta$ since otherwise we would have $u \geq \beta $ a.e. on $ \Omega$ and so the strong maximum principle (applied to $ u- \beta$)  would give $ u > \beta $ a.e. on $\Omega$ (recall that $v$ is not constant by assumption). Hence there would exist $\bar{x} \in \Omega $ such that $ v(\bar{x}) < \beta < u(\bar{x})$. This would imply that $f(t) = a + b t $ for all $t \in (\alpha, u(\bar{x})),$ contradicting the definition of $(\alpha, \beta)$. 
If $S > \beta$ (and since $\inf_\Omega u < \beta $ )  the same argument used to prove $\alpha \leq I$ can be applied to the solution $u$ to get a contradiction (again that $f$ would satisfy $f(t) = a + b t $ for all $t \in (\alpha, u(y)),$ for some $y \in \Omega$ such that $u(y) > \beta$). Therefore $ S \leq \beta$ and  $f(t) = a + b t $ for all $t \in (I, S).$ Hence, $u$ and $v$ are of class $C^2(\Omega)$ by standard elliptic estimates. Finally, $u-v \in H^1_0(\Omega)$ solves $ - \Delta (u-v) = f(u)-f(v) = b(u-v)$ in $ \Omega$ and $u-v>0$ in $\Omega$. Thererefore $u-v$ is a positive eigenfunction and so we necessarily have $ b = \lambda_1$. This concludes the proof. 
\hfill\qed
\end{proof}

\bigskip

\begin{theorem} \label{theorem:compa-stable-solutions-nonordered} Let $\Omega$ be a bounded domain of $\R^N$, $N \geq 1$ and let $f \in C^1([0;+\infty))$ be a convex function. Assume that $u,v \in H^1(\Omega)$ satisfy $ u-v \in H^1_0(\Omega),$ $f(u),f(v) \in L^1_{loc}(\Omega)$ and both $u$ and $v$ are solution to 
	\begin{equation}
	\left\{
	\begin{aligned}
	-\Delta w&= f(w) \qquad\text{in $\mathcal D'(\Omega)$}\\
	w&\geq 0 \qquad\text{a.e. on $\Omega$},
	\end{aligned}
	\right.
	\end{equation}
	If $u$ satisfies $f'(u) \in L^1_{loc}(\Omega)$ and $u$ is stable, then $u$ and $v$ are ordered, namely, one of the following three cases holds true : $u<v$ a.e. in $ \Omega$, $u \equiv v$ a.e. in $ \Omega$ or $ u > v$ a.e. in $ \Omega$. 
\end{theorem}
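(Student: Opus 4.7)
The plan is to test both equations for $u$ and $v$ against $\phi_0 := (u-v)^+ \in H^1_0(\Omega)$ (which lies in $H^1_0(\Omega)$ since $u-v$ does), combine the resulting energy identity with the stability inequality for $u$ and the convexity of $f$, and then invoke the strong maximum principle twice to extract the trichotomy. The strategy mirrors the one used in Theorem \ref{theorem:compa-stable-solutions}, but with $\phi_0$ playing the role of $u-v$ and without assuming a priori that the pair is ordered.

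First, by item (ii) of Proposition \ref{prop:res-auxiliares}, $(f(u)-f(v))(u-v)^+ \in L^1(\Omega)$. Using the same truncation/density argument as in the proof of Theorem \ref{theorem:compa-stable-solutions} (approximating $\phi_0$ by $\min(\phi_0,\rho_n)$ with $\rho_n \in C^{\infty}_c(\Omega)$, $\rho_n \nearrow \phi_0$ in $H^1_0(\Omega)$), I establish the energy identity
\[
\int_\Omega |\nabla \phi_0|^2 \;=\; \int_\Omega (f(u)-f(v))\phi_0.
\]
Next, by item (i) of Proposition \ref{prop:res-auxiliares}, the stability of $u$ extends to test functions in $H^1_0(\Omega)$. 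Testing it with $\phi_0$ and using the pointwise convexity inequality $(f(u)-f(v))(u-v)^+ \leq f'(u)\phi_0^2$ a.e. (tangent line from above on $\{u>v\}$, trivial on $\{u\leq v\}$), I obtain
\[
\int_\Omega f'(u)\phi_0^2 \;\leq\; \int_\Omega |\nabla \phi_0|^2 \;=\; \int_\Omega (f(u)-f(v))\phi_0 \;\leq\; \int_\Omega f'(u)\phi_0^2,
\]
so that equality holds throughout.

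The key step is to deduce from this equality that $\phi_0$ weakly solves $-\Delta \phi_0 = f'(u)\phi_0$ in $\Omega$. Setting $J(\psi) := \int_\Omega |\nabla \psi|^2 - \int_\Omega f'(u)\psi^2$, I expand
\[
0 \;\leq\; J(\phi_0 + t\psi) \;=\; J(\phi_0) + 2tL(\psi) + t^2 J(\psi)
\]
for every $\psi \in H^1_0(\Omega)$ and every $t \in \R$, where $L(\psi) := \int_\Omega \nabla \phi_0 \cdot \nabla \psi - \int_\Omega f'(u)\phi_0 \psi$. Since $J(\phi_0)=0$ and $J\geq 0$ on $H^1_0(\Omega)$, this forces $L(\psi)=0$ for every $\psi$, which is the weak form of the required equation. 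Rewriting it as $-\Delta \phi_0 + [f'(u)]^- \phi_0 = [f'(u)]^+ \phi_0 \geq 0$ and noting that $0 \leq [f'(u)]^- \leq |f'(0)|$ a.e. on $\Omega$ (by convexity of $f$ and $u \geq 0$), the strong maximum principle (as invoked in the proof of Theorem \ref{theorem:compa-stable-solutions}) yields either $\phi_0 \equiv 0$, i.e.\ $u \leq v$ a.e., or $\phi_0 > 0$ a.e., i.e.\ $u > v$ a.e.

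To finish the trichotomy, I treat the remaining subcase $u \leq v$ a.e.\ by setting $w := v - u \in H^1_0(\Omega)$, $w \geq 0$. Convexity gives $-\Delta w = f(v)-f(u) \geq f'(u)w$ distributionally, so $-\Delta w + [f'(u)]^- w \geq [f'(u)]^+ w \geq 0$, and a second application of the strong maximum principle gives either $w \equiv 0$ (so $u \equiv v$) or $w > 0$ a.e.\ (so $u < v$ a.e.), completing the proof. The main obstacle I foresee is the quadratic-form argument in the third paragraph: it crucially requires Proposition \ref{prop:res-auxiliares}(i) both to make sense of the perturbation $J(\phi_0 + t\psi)$ for arbitrary $\psi \in H^1_0(\Omega)$ and to guarantee $J \geq 0$ on the whole space, so that the first-order optimality condition genuinely yields the Euler--Lagrange equation rather than only a local inequality.
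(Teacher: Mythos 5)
Your proof is correct and follows essentially the same route as the paper's: the same energy identity for $(u-v)^+$ via truncation, the same chain of inequalities combining stability with the convexity bound, the same minimization/Euler--Lagrange step to get $-\Delta (u-v)^+ = f'(u)(u-v)^+$, and the same two applications of the strong maximum principle. The only difference is that you spell out the quadratic-form expansion that the paper states in one line.
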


\begin{proof} By item ii) of Proposition \ref{prop:res-auxiliares} we have 	
\begin{equation}\label{buona-int}
(f(u) -f(v))(u-v)^+ \in L^1(\Omega)
\end{equation}
In view of the above results we can follow the proof of Theorem \ref{theorem:compa-stable-solutions} to obtain 
\begin{equation}\label{equazione-migliorata2}
\int_{\Omega} \vert \nabla (u-v)^+ \vert^2 =  \int_{\Omega} (f(u)-f(v))(u-v)^+.
\end{equation}
Using \eqref{stab-migliorata} with $ \varphi = (u-v)^+$, \eqref{equazione-migliorata2} and \eqref{disug-conv-migliorata2} we have 
\begin{equation*}
\int_{\Omega} f'(u)[(u-v)^+]^2 \leq  \int_{\Omega} \vert \nabla (u-v)^+ \vert^2 =  \int_{\Omega} (f(u)-f(v))(u-v)^+ \leq \int_{\Omega} f'(u)[(u-v)^+]^2 
\end{equation*}
and so
\begin{equation*}
\int_{\Omega} \vert \nabla (u-v)^+ \vert^2 - \int_{\Omega} f'(u)[(u-v)^+]^2 =0.
\end{equation*}
The latter and \eqref{stab-migliorata} imply that $(u-v)^+$ minimizes the the functional 
$\psi \longrightarrow \int_{\Omega} \vert \nabla \psi \vert^2 - \int_{\Omega} f'(u)\psi^2$ over $H^1_0(\Omega)$ and therefore $(u-v)^+$ solves 
\begin{equation}\label{ugual-stabilita}
- \Delta (u-v)^+ = f'(u)(u-v)^+ \qquad {\textit{in}} \quad \mathcal D'(\Omega).
\end{equation}
Then
\begin{equation}
- \Delta (u-v)^+ + [f'(u)]^-(u-v)^+ =  [f'(u)]^+(u-v)^+ \geq 0 \qquad {\textit{in}} \quad \mathcal D'(\Omega)
\end{equation}
with $[f'(u)]^- \in L^{\infty}(\Omega)$, since $ f'(u) \geq f'(0)$ by convexity of $f$. By the strong maximum principle, either $(u-v)^+ >0$ a.e. in $\Omega$ or $(u-v)^+ =0$ a.e. in $\Omega$. That is, either $u>v$ a.e. in $\Omega$ or $u \leq v$ a.e. in $\Omega$. In the latter case we have $ - \Delta (v-u) = f(v) -f(u) \geq f'(u)(v-u)$ in $\mathcal D'(\Omega)$ and so also $- \Delta (v-u) + [f'(u)]^-(v-u) \geq [f'(u)]^+(v-u) \geq 0$ in $\mathcal D'(\Omega),$ since $v-u \in H^1_0(\Omega)$ is nonnegative a.e. in $ \Omega$. As above, another application of the strong maximum principle  yields either $ u \equiv v$ or $ v> u $ a.e. on $\Omega$. 
\hfill\qed
\end{proof}

\bigskip

By combining Lemma \ref{lemma lambda1} and Theorem \ref{theorem:compa-stable-solutions} we immediately obtain the following classification result for stable solutions  in $H^1_0(\Omega)$.  

\begin{theorem}\label{th:stable:f(0)=0}
	Let $\Omega$ be a bounded domain of $\R^N$, $N \geq 1$ and let $f \in C^1(\R)$ be a 
	convex function such that $f(0)=0$. Assume that $u\in H^1_0(\Omega)$, $f(u) \in L^1_{loc}(\Omega)$ and that $u$ is a stable solution to
	\begin{equation}
	\left\{
	\begin{aligned}
	-\Delta u& = f(u) \qquad\text{in $\mathcal D'(\Omega)$}\\
	u&\geq 0 \qquad\text{a.e. on $\Omega$}.
	\end{aligned}
	\right.
	\end{equation} 
	Then, either $u \equiv 0$ or $f(t) = \lambda_1 t $ on $(0, \sup_\Omega u)$ and $u \in C^\infty(\Omega) \cap H^1_0(\Omega)$ is a positive first eigenfunction of $ -\Delta$ with homogeneous Dirichlet boundary conditions.   
\end{theorem}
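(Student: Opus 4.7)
The natural plan is to specialize Theorem \ref{theorem:compa-stable-solutions} to the comparison pair $(u,v)$ with $v\equiv 0$; Theorem \ref{th:stable:f(0)=0} should then drop out with essentially no extra work, which is why the authors advertise it as an immediate corollary.

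\textbf{Step 1 (setting up the comparison).} I would first check that $v\equiv 0$ is an admissible ``second solution.'' Since $f(0)=0$, the constant function $v\equiv 0$ satisfies $-\Delta v=f(v)$ in $\mathcal D'(\Omega)$; moreover $v\in H^1(\Omega)$, $f(v)=0\in L^1_{loc}(\Omega)$, and $0=v\le u$ a.e.\ on $\Omega$. Because $u\in H^1_0(\Omega)$, also $u-v=u\in H^1_0(\Omega)$. The hypotheses on $u$ (namely $u\in H^1(\Omega)$, $f(u)\in L^1_{loc}$, $f'(u)\in L^1_{loc}$, $u$ a stable solution, $u\ge 0$) are precisely the ones needed to apply Theorem \ref{theorem:compa-stable-solutions} to the pair $(u,v)$.

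\textbf{Step 2 (invoking the comparison theorem).} Theorem \ref{theorem:compa-stable-solutions} then yields the dichotomy: either $u\equiv v\equiv 0$, in which case we are done, or
\[
f(t)=a+\lambda_1 t \qquad\text{for all } t\in\bigl(\inf_\Omega v,\ \sup_\Omega u\bigr)=\bigl(0,\ \sup_\Omega u\bigr),
\]
for some $a\in\R$, both $u$ and $v$ lie in $C^\infty(\Omega)$, and $u-v=u$ is a positive first Dirichlet eigenfunction of $-\Delta$.

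\textbf{Step 3 (identifying $a=0$).} To upgrade $f(t)=a+\lambda_1 t$ to $f(t)=\lambda_1 t$ on $(0,\sup_\Omega u)$ I would argue in either of two equivalent ways. Directly: since $f\in C^1(\R)$, letting $t\to 0^+$ in $f(t)=a+\lambda_1 t$ and using $f(0)=0$ gives $a=0$. Alternatively, one can simply quote the footnote of Theorem \ref{theorem:compa-stable-solutions}, which asserts $a=0$ if and only if both $u$ and $v$ belong to $H^1_0(\Omega)$ — and here this is automatic since $u\in H^1_0(\Omega)$ and $v\equiv 0$.

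\textbf{Main obstacle.} There is essentially no obstacle: all the analytic content (stability-driven rigidity, smoothness, the eigenfunction conclusion) is already packaged inside Theorem \ref{theorem:compa-stable-solutions}. The only things to watch are purely bookkeeping, namely (i) checking the admissibility of $v\equiv 0$ under the slightly weaker regularity $f\in C^1(\R)$ by restricting $f$ to $[0,+\infty)$, and (ii) the identification $a=0$ just explained. That the resulting $u$ is a positive first eigenfunction in $C^\infty(\Omega)\cap H^1_0(\Omega)$ is exactly what Theorem \ref{theorem:compa-stable-solutions} delivers once $a=0$ is secured.
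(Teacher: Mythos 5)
Your proposal is correct and follows essentially the same route as the paper: the authors also take $v\equiv 0$ as the comparison solution in Theorem \ref{theorem:compa-stable-solutions} and conclude $a=0$ from the footnote criterion $u,v\in H^1_0(\Omega)\Longleftrightarrow a=0$. Your alternative identification of $a=0$ by letting $t\to 0^+$ and using $f(0)=0$ is equally valid.
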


\begin{remark}
If $u \equiv0$, then necessarily $ f'(0) \leq \lambda_1 $ by Lemma \ref{lemma lambda1}. Also observe that for any $\alpha \leq \lambda_1$ there is a convex function $f$ satisfying $f(0)=0$, $f'(0)=\alpha $ and such that $u \equiv 0$ is a stable solution to \eqref{eq:Lemma0}. An example is provided by $f(u) = u^2 +\alpha u$. 
\end{remark}

\begin{proof} 
	$v\equiv 0$ is a solution to \eqref{eq:Lemma0} since $f(0)=0$. Then, an application of Theorem \ref{theorem:compa-stable-solutions} provides the desired results. Indeed, since $u \in H^1_0(\Omega)$, we have $a=0$ (as observed in the footnote to Theorem \ref{theorem:compa-stable-solutions}). Therefore $f(t) = \lambda_1 t $ on $(\inf_\Omega v, \sup_\Omega u) = (0, \sup_\Omega u).$
	
	\hfill\qed
\end{proof}

\medskip

\begin{proposition}\label{prop: implica-stab}
Let $\Omega$ be a bounded domain of $\R^N$, $N \geq 1$ and let $f \in C^1([0;+\infty))$ be a convex function. Assume that $u,v \in H^1_{loc}(\Omega)$ satisfy $0 \leq v < u $ a.e. on $\Omega,$ $f(u),f(v) \in L^1_{loc}(\Omega)$ and 
\begin{equation}\label{sopra-sotto-implica-stab}
\left\{
\begin{aligned}
-\Delta u&\geq  f(u) \qquad\text{in $\mathcal D'(\Omega)$},\\
-\Delta v&\leq  f(v) \qquad\text{in $\mathcal D'(\Omega)$}.
\end{aligned}
\right.
\end{equation}
Then 
\begin{equation}\label{stab-migliorata2}
f'(v) \in L^1_{loc}(\Omega), \qquad  \int_{\Omega} f'(v) \varphi^2 \leq  \int_{\Omega} \vert \nabla \varphi \vert^2 \qquad \qquad \forall \varphi \in C^{\infty}_c(\Omega).
\end{equation}	 
In particular, if $v$ is a solution to $ -\Delta v = f(v)$ in ${\mathcal D'}(\Omega)$, then $v$ is stable. 
\end{proposition}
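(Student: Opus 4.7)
The plan is to linearize around $v$ via convexity, producing a positive distributional supersolution of the linearized equation, and then apply a Picone/Allegretto--Piepenbrink style test function of the form $\varphi^2/(w+\epsilon)$ to transfer positivity of that supersolution into the stability inequality in the limit $\epsilon \to 0^+$. Set $w := u - v \in H^1_{loc}(\Omega)$; by the hypothesis $v < u$, we have $w > 0$ a.e. in $\Omega$. Subtracting the two inequalities in \eqref{sopra-sotto-implica-stab} yields $-\Delta w \ge f(u)-f(v)$ in $\mathcal D'(\Omega)$, while convexity of $f$ gives $f(u)-f(v) \ge f'(v)(u-v) = f'(v)\, w$ a.e. Item iii) of Proposition \ref{prop:res-auxiliares} guarantees $f'(v)\, w \in L^1_{loc}(\Omega)$, and a standard mollification argument (convolving nonnegative compactly supported test functions with nonnegative smooth kernels, and using the $L^1_{loc}$ control on $f(u)-f(v)$) extends the resulting distributional inequality to
\[
\int_\Omega \nabla w \cdot \nabla \psi \;\ge\; \int_\Omega f'(v)\, w\, \psi
\]
for every nonnegative $\psi \in H^1(\Omega) \cap L^\infty(\Omega)$ with compact support in $\Omega$.

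\textbf{Picone-type test function.} Fix $\varphi \in C^\infty_c(\Omega)$ and $\epsilon > 0$, and pick $\psi_\epsilon := \varphi^2/(w+\epsilon)$. Since $w+\epsilon \ge \epsilon > 0$ and $w \in H^1_{loc}(\Omega)$, the function $\psi_\epsilon$ is nonnegative, bounded (by $\|\varphi\|_\infty^2/\epsilon$), and belongs to $H^1(\Omega)$ with support in $\operatorname{supp}\varphi$, hence is an admissible test function. A direct computation gives
\[
\nabla w \cdot \nabla \psi_\epsilon \;=\; \frac{2 \varphi\, \nabla w \cdot \nabla \varphi}{w+\epsilon} \;-\; \frac{\varphi^2 |\nabla w|^2}{(w+\epsilon)^2},
\]
and Young's inequality applied with $a = \varphi\, |\nabla w|/(w+\epsilon)$ and $b = |\nabla \varphi|$ yields the pointwise bound $\nabla w \cdot \nabla \psi_\epsilon \le |\nabla \varphi|^2$. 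Inserting $\psi_\epsilon$ in the extended distributional inequality produces the regularized stability estimate
\[
\int_\Omega f'(v)\, \varphi^2 \,\frac{w}{w+\epsilon} \;\le\; \int_\Omega |\nabla \varphi|^2.
\]

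\textbf{Passage to the limit and $L^1_{loc}$ regularity.} Since $w > 0$ a.e., the ratio $w/(w+\epsilon) \nearrow 1$ a.e. as $\epsilon \searrow 0$. By convexity $f'(v) \ge f'(0)$, so $[f'(v)]^- \le |f'(0)|$ is bounded; hence $\int_\Omega [f'(v)]^- \varphi^2\, w/(w+\epsilon) \to \int_\Omega [f'(v)]^- \varphi^2$ by dominated convergence, while monotone convergence gives $\int_\Omega [f'(v)]^+ \varphi^2\, w/(w+\epsilon) \nearrow \int_\Omega [f'(v)]^+ \varphi^2 \in [0,+\infty]$. Passing to the limit in the regularized inequality yields
\[
\int_\Omega [f'(v)]^+ \varphi^2 \;\le\; \int_\Omega |\nabla \varphi|^2 + \int_\Omega [f'(v)]^- \varphi^2 \;<\; +\infty,
\]
which proves $[f'(v)]^+ \in L^1_{loc}(\Omega)$ (and hence $f'(v) \in L^1_{loc}(\Omega)$) and, after rearrangement, the stability inequality \eqref{stab-migliorata2}. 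The ``in particular'' statement is then immediate, since a distributional solution of $-\Delta v = f(v)$ satisfies both inequalities in \eqref{sopra-sotto-implica-stab}. The delicate point is precisely the $\epsilon$-truncation: a priori one does not know that $f'(v) \in L^1_{loc}(\Omega)$, so it is not legitimate to test directly with $\varphi^2/w$; the cutoff $w+\epsilon$ keeps the test function bounded and all integrands in $L^1$, and the $L^1_{loc}$ integrability of $f'(v)$ only emerges once the regularization has been removed.
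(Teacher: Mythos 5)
Your proof is correct, and the overall strategy coincides with the paper's: linearize from below at $v$ using convexity, then run the Picone/Allegretto--Piepenbrink argument with the quotient $\varphi^2/(u-v)$ and Young's inequality. The one genuine difference is how the test function is legitimized. The paper first invokes the strong maximum principle for $H^1$ supersolutions (applied to $-\Delta(u-v)+[f'(v)]^-(u-v)\ge 0$, with $[f'(v)]^-$ bounded by convexity) to obtain $u-v\ge c(\omega)>0$ on every $\omega\subset\subset\Omega$; this makes $\varphi^2/(u-v)$ bounded and admissible directly, and also yields $f'(v)\in L^1_{loc}(\Omega)$ \emph{beforehand}, via item iii) of Proposition \ref{prop:res-auxiliares} combined with $1/(u-v)\in L^\infty_{loc}(\Omega)$. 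You instead regularize with $\varphi^2/(u-v+\epsilon)$, which is admissible for free, and recover both the integrability of $f'(v)$ and the stability inequality only in the limit $\epsilon\searrow 0$, by splitting $f'(v)$ into its positive part (monotone convergence, since $w/(w+\epsilon)\nearrow 1$ a.e. because $w>0$ a.e.) and its negative part (bounded by $[f'(0)]^-$ via convexity, so dominated convergence applies). Your route thus dispenses with the strong maximum principle entirely and uses only $u-v>0$ a.e., at the price of the extra limiting argument, which you carry out correctly. Both proofs rest on the same preliminary facts: $f'(v)(u-v)\in L^1_{loc}(\Omega)$ from item iii) of Proposition \ref{prop:res-auxiliares}, and the extension of the distributional inequality to nonnegative, bounded, compactly supported $H^1$ test functions.
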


\begin{proof}
Recall that, by convexity of $f$, we have $ f(u) -f(v) \geq f'(v)(u-v)$ and $ [f'(v)]^- \leq [f'(0)]^-$ a.e. in $\Omega$. 
Therefore, $ [f'(v)]^- \in L^{\infty}(\Omega)$ and, using \eqref{sopra-sotto-implica-stab}, we obtain
\[
- \Delta (u-v) + [f'(v)]^-(u-v) \geq f(u)-f(v) + [f'(v)]^-(u-v) \geq [f'(v)]^+(u-v) \geq 0 \qquad in \quad \mathcal D'(\Omega).
\]
By the strong maximum principle and $ u-v>0$ a.e. in $\Omega$ we then get 
\begin{equation}\label{controllo-armonico-2}
\forall \, \omega \subset \subset \Omega \qquad u - v \geq  c(\omega)>0 \qquad  \text {a.e. on $\omega$,}
\end{equation}
where $c(\omega)$ is a positive constant depending on the open subset $\omega$. The latter implies that $ \frac{1}{u-v} \in L^{\infty}_{loc}(\Omega)$ and so $f'(v) \in L^1_{loc}(\Omega),$ thanks to item iii) of Proposition \ref{prop:res-auxiliares}. This proves the first claim of \eqref{stab-migliorata2}. To prove the second one we recall that 
$f'(v)(u-v) \in L^1_{loc}(\Omega)$ and use once again \eqref{sopra-sotto-implica-stab} to get
\begin{equation}\label{domina}
\int_{\Omega} \nabla (u-v) \nabla \phi \geq \int_{\Omega} f'(v)(u-v) \phi, \qquad \forall \, \phi \in C^{\infty}_c(\Omega).
\end{equation}
As before,  a standard approximation argument and Lebesgue's dominated convergence theorem yield that \eqref{domina} holds true for any $\phi \in H^1(\Omega) \cap L^{\infty}(\Omega)$ with compact support. 
Therefore, for every $ \varphi \in C^{\infty}_c(\Omega)$ and recalling \eqref{controllo-armonico-2}, we can then take $ \phi = \frac{\varphi^2}{u-v}$ in \eqref{domina} and find 
\begin{equation}\label{domina-2}
\int_{\Omega} \nabla (u-v) \nabla \Big(\frac{\varphi^2}{u-v} \Big) \geq \int_{\Omega} f'(v) \varphi^2, \qquad \forall \, \varphi \in C^{\infty}_c(\Omega).
\end{equation}
Hence,
\[
 \int_{\Omega} f'(v) \varphi^2 \leq \int_{\Omega}  2 \frac {\varphi \nabla (u-v) }{u-v} \nabla \varphi - 
 \int_{\Omega} \frac{\varphi^2}{(u-v)^2} \vert \nabla (u-v)\vert^2, \qquad \forall \, \varphi \in C^{\infty}_c(\Omega). 
\]
The second conclusion of \eqref{stab-migliorata2} then follows by applying Young's inequality to the first integral on the r-h-s of the latter inequality. The last claim is a consquence of \eqref{stab-migliorata2}. 
\hfill\qed
\end{proof}

\medskip

\noindent{\bf Proof of Theorem \ref{smoothness boundary}.}

(1) Since $u$ has finite Morse index, there exists a neighborhood of the boundary of the form $\Omega_\epsilon=\{x\in\Omega:\;\text{ dist\;}(x,\partial\Omega)<\epsilon\}$ such that $u$ is stable in $\Omega_\epsilon$ { (see point (5) below Definition \ref{loc stab}).} Thanks to Theorem \ref{Prop-approx-crucial}, it suffices to prove \eqref{boundary} in the case where $u\in C^{2,\alpha}(\overline\Omega_{\epsilon})$.
Also, the estimate will follow if we prove that for some $\rho\in(0,\epsilon)$ and for every $x\in\Omega_{\rho}$, there exists a set $I_{x}$ such that $\vert I_{x}\vert\ge\gamma$ and $u(x)\le u(y)$, for all $y\in I_{x}$.

To this end, we apply the moving-plane method. For $y\in\partial\Omega$, let $n(y)$ denote the unit normal vector to $\partial\Omega$, pointing outwards. Thanks to Lemmas 4.1 and 4.2 in \cite{ac}, there exists a constant $\lambda_0\in(0,\epsilon/2)$ depending on $\Omega$ only, such that 
$$
\{ x=y-tn(y)\;:\; 0<t<2\lambda_0, y\in\partial\Omega \} \subset \Omega
$$
In addition, in a fixed neighborhood of $\partial\Omega$, every point can be written in the form $x=y-tn(y)$, where $0<t<\lambda_0$ and $y$ is the unique projection of $x$ on $\partial\Omega$.
Fix $x_0\in\partial\Omega$ and $n=n(x_0)$. By applying the standard moving-plane method in the cap
$
 \Sigma_{\lambda}:=\left\{x\in\Omega \;:\;0<-(x-x_{0})\cdot n<\lambda \right\},
$
we deduce that 
\begin{equation}\label{mpm}
\partial_n u<0 \quad\text{in $\Sigma_\lambda$,} \quad \text{for every $\lambda\in[0,\lambda_0]$}.
\end{equation}
Next, since $\Omega$ is uniformly convex, there exists a radius $r>0$ depending on $\Omega$ only, such that the geodesic ball $B=B(n(x_0),r)\subset\mathbb S^{N-1}$ can be realized as the set of normals at nearby points and so $B\subset n(\partial\Omega)$.  
{
To see this, assume without loss of generality that $x_0=0$ and that $\partial\Omega$ coincides near $x_0$ with the graph of some $C^2$ function $\varphi:\R^{N-1}\to\R$ such that $\varphi(0)=0$, $\nabla\varphi(0)=0$ and $\nabla^2\varphi(0)$ is a diagonal matrix with eigenvalues bounded below by a positive constant (i.e. the directions of principal curvature of $\partial\Omega$ at $x_0$ coincide with the canonical basis of $\R^{N-1}$). Then, $n(x_0)=(0,\dots,0,1)$ and for $t=(t_1,0,\dots,0)\in\R^{N-1}$ small, there holds
$$n(\varphi(t))=\frac{(-\nabla\varphi(t),1)}{\sqrt{1+\vert\nabla\varphi(t)\vert^2}}=n(x_0)-t_1\left(\frac{\partial^2\varphi}{\partial t_1^2}(0), 0, \dots,0\right)+o(\vert t_1\vert)
$$
and so $n$ describes an arc of circle in the $x_1$ direction as $t_1$ varies in some small interval $(-r_1,r_1)$. This is also true (uniformly, since $\Omega$ is uniformly convex) in any direction $e\in \R^{N-1}$ and so a small geodesic ball $B=B(n(x_0),r)\subset\mathbb S^{N-1}$ can indeed be realized as the set of normals at nearby points.
}
This in turn implies that for all $\theta\in B$,
$$
\partial_{\theta}u<0\qquad\text{in $\Sigma:=\left\{x\in\Omega\;:\; \frac14\lambda_{0}<-(x-x_{0})\cdot n(x_{0})<\frac34\lambda_{0}\right\}$.}
$$
Indeed, applying the moving-plane procedure at every point $y\in\partial\Omega$ such that $\theta=n(y)$, $\theta\in B$, we have 
$$
\partial_{\theta}u<0\quad\text{in }\left\{x\in\Omega \;:\;0<-(x-y)\cdot \theta<{\lambda_{0}} \right\}.
$$
By taking a smaller ball $B$ if necessary, we may assume that 
$$
\vert (x-x_{0})\cdot(\theta - n(x_{0})) + (x_0-y)\cdot\theta\vert <\frac14\lambda_{0}, \quad\text{for all $x\in\Sigma$ and $\theta=n(y) \in B$.}
$$
Now,  since $-(x-y)\cdot\theta=-(x-x_0)\cdot n(x_0)-(x-x_0)\cdot(\theta-n(x_{0}))- (x_{0}-y)\cdot\theta$, we have for any $x\in\Sigma$,
$$
{\lambda_{0}} = \frac14\lambda_{0}+\frac34\lambda_{0}>-(x-y)\cdot\theta>\frac14\lambda_{0} -\frac14\lambda_{0}=0
$$
and so, as claimed, for any $x\in\Sigma$, there holds
$$
\partial_{\theta} u(x)<0.
$$
Now take $\rho=\lambda_{0}/8$. Fix a point $x\in\Omega_{\rho}=\{x\in\Omega\;:\; \text{\rm dist}(x,\partial\Omega)<\rho\}$ and let $x_{0}$ denote its projection on $\partial\Omega$. On the one hand, $u(x)\le u(x_{1})$, where $x_{1}=x_{0}-\rho n(x_{0})$. On the other hand, $u(x_{1})\le u(z)$, for all $z$ in the cone $I_{x}\subset\Sigma$ having vertex at $x_{1}$, opening angle $B$, and height $\lambda_{0}/2$ and the proof is complete.

\

{ (2) Fix $\epsilon>2\lambda_0>0$ as above. According to Theorem \ref{Prop-approx-crucial}, there exists a sequence of functions $u_k\in C^{2,\alpha}(\Omega_\epsilon)$ which are stable solutions of a semilinear elliptic equation in $\Omega_\epsilon$ and converge a.e. to $u$ in $\Omega_\epsilon$. If $N\le 9$, we may apply the interior estimate Theorem \ref{th:cfrs} to deduce that 
$$
\Vert u_k\Vert_{L^\infty(\Omega_{2\lambda_0}\setminus\overline{\Omega_{\lambda_0}})}\le M\Vert u_k\Vert_{L^1(\Omega_\epsilon)}\le M\Vert u\Vert_{L^1(\Omega)},
$$
for some constant $M$ depending on $\Omega$. Furthermore,  if $\Omega$ is convex, we know that $u_k$ is monotone in the normal direction i.e. \eqref{mpm} holds for $u=u_k$, and so the inequality remains true all of $\Omega_{2\lambda_0}$. Passing to the limit $k\to+\infty$ and using standard elliptic regularity, we deduce that $u\in C^{2,\alpha}(\overline{\Omega_{\lambda_0}})$ in this case. In the case where $f$ is nondecreasing on $\Omega$ is $C^3$, we can directly apply Theorem \ref{Prop-approx-crucial} combined to Theorem 1.5 in \cite{cfrs}. 
}

\

(3) We write a generic point in $\R^N$ as $(x,y)\in\R^{N-1}\times\R$. Let $B'$ be the unit ball in $\R^{N-1}$, $N-1\ge 10$ and $\Omega\subset\R^N$ the open set obtained by gluing the cylinder $B'\times(-1,1)$ to the unit half-ball centered at $(x,y)=(0,-1)$ and to the unit half-ball centered at $(x,y)=(0,1)$. Let 
$\lambda_n:[0,2]\to\R_+$ be a $C^2$ increasing concave function such that $\lambda_n(y)=ny$ for $y\in[0,1]$ and $\lambda_n(2)=	(n+\rho_n)$, where $\rho_n\to0$. Extend $\lambda_n$ as an odd function on $[-2,2]$.
Then, 
the domain $\Omega^n=\{(x,\lambda_n(y))\;:\; (x,y)\in\Omega\}$ is convex but clearly not uniformly, nor even strictly. We let $u_n$ be the minimal solution to \eqref{equazione} with nonlinearity $f(u)=2(N-3)e^u$ and domain $\Omega^n$. Since $\underline u=0$ and $\overline u=-2\ln(\vert x\vert)$ are ordered sub and supersolution to the problem, $u_n$ is well-defined, stable and
$$
0< u_n< -2\ln(\vert x\vert)\qquad\text{in $\Omega^n$.}
$$
It already readily follows that the average $\frac1{\vert\Omega^n\vert}\Vert u_n \Vert_{L^{1}{(\Omega^n)}}$ of $u_n$ remains bounded.
In addition, 
since $-2\ln(\vert x\vert)$ is a strict supersolution of the equation, $u_n$ cannot be an extremal solution and so $u_n$ is smooth and strictly stable, i.e. its linearized operator has positive first eigenvalue.

Recall that $\rho_n\to 0$
 and assume by contradiction that $\Vert u_n\Vert_{L^\infty((\Omega^n)_{\rho_n})}\le M$ for some constant $M>0$. For $(x,y)\in\Omega$, let $v_n(x,y)=u_n(x,\lambda_n(y))$. Then, $v_n=0$ on $\partial\Omega$ and, letting $\mu_n$ denote the inverse function of $\lambda_n$,
\begin{equation}\label{eqn}
-(\Delta_x + (\mu_n')^2\partial_y^2+\mu_n''\partial_y)v_n  = f(v_n) \qquad\text{in $\Omega$}.
\end{equation}
We claim that 
$$\Vert\nabla v_n\Vert_{L^\infty(\Omega)}\le K,$$ for some constant $K>0$. To see this, we begin by estimating $\vert\nabla v_n\vert$ on $\partial\Omega$. On the flat part of the boundary, we have a natural barrier: since $v_n<-2\ln(\vert x\vert)$ in $\Omega$ and $v_n=-2\ln(\vert x\vert)=0$ on $\partial B'\times(-1,1)$, we deduce that $\Vert\nabla v_n\Vert_{L^\infty(\partial B'\times(-1,1))}\le 2$. The function $\zeta(x,y)=1-\vert(x,y)-(0,1)\vert^2$ vanishes on the boundary of the half-ball centered at $(0,1)$ and satisfies 
$$
-(\Delta_x + (\mu_n')^2\partial_y^2+\mu_n''\partial_y)\zeta = 2(N-1)+2(\mu_n')^2-\mu_n''\partial_y\zeta \ge 2(N-1) \qquad\text{for $y\ge1$.}
$$
Hence, a constant multiple of $\zeta$ can be used as a barrier on the half-ball centered at $(0,1)$. Working similarly with the other half-ball, we deduce that $\Vert\nabla v_n\Vert_{L^\infty(\partial\Omega)}\le K$ on the whole boundary of $\Omega$. To extend the inequality to the whole of $\Omega$, we observe that any partial derivative $\partial_i v_n$ solves the linearized equation. Since $v_n$ is strictly stable (because this is the case for $u_n$), the linearized operator at $v_n$ has positive first eigenvalue. It follows that
$$
\Vert\nabla v_n\Vert_{L^\infty(\Omega)}\le\Vert\nabla v_n\Vert_{L^\infty(\partial\Omega)}\le K
$$
as claimed. Up to extraction, the sequence $(v_n)$ converges uniformly to some lipschitz-continuous function $v$ in $\overline\Omega$. In addition, $v=0$ on $\partial\Omega$, and for any $\varphi\in C^\infty_c(B'\times(-1,1))$,
$
\int_\Omega f(v_n)\varphi\;dx\to \int_\Omega f(v)\varphi\;dx
$
 while
$$
\int_\Omega v_n(-(\Delta_x + (\mu_n')^2\partial_y^2+\mu_n''\partial_y))\varphi\;dx = \int_\Omega v_n(-(\Delta_x + \frac1{n^2}\partial_y^2))\varphi\;dx \to \int_\Omega v(-\Delta_x \varphi) \;dx.
$$ 
In particular, the function $w(x)=v(x,0)\in H^1_0(B')$ is a weak stable solution to 
$$
-\Delta w= f(w)\qquad\text{in $B'$.}
$$
But so is $\overline u=-2\ln(\vert x\vert)$. By uniqueness of the extremal solution, we must have $w=\overline u$, which is impossible, since $w$ is bounded. Hence, up to extraction, $\Vert u_n\Vert_{L^\infty((\Omega^n)_{\rho_n})}\to+\infty$.

\appendix
\section{}

 \begin{proposition} \label{Prop-approx-crucial2} 
	Assume  that $\alpha \in (0,1)$ and $N \geq 2$. Let $\Omega$ be a bounded domain of $\R^N$ and let $f\in C^1([0,+\infty))$ be a nondecreasing and convex function. Assume that
	$u \in H^1(\Omega)$ 
	is a stable\footnote{Note that $f'(u)$ is a nonnegative Lebesgue measurable function (by our assumptions on $f$) and so the stability inequality \eqref{stab} has a meaning.} solution of \eqref{equazione} such that $u\ge 0$ a.e. in $\Omega$.
	\item 1) 
	There exists a sequence $(\varepsilon_n)$ of real numbers in $[0,1)$ such that $\varepsilon_n \searrow 0$ and a sequence $(u_n)$ of functions in $H^1(\Omega) \cap C^2(\Omega)$ such that $u_n$ is a stable weak solution\footnote{That is a function $u_n$ satisfying $\int_\Omega \nabla u_n \nabla \varphi = \int_\Omega (1-\epsilon_n) f(u_n) \varphi$, for all $ \varphi \in H^1_0(\Omega)$. } to 
	\begin{equation}\label{eq:un}
	\left\{
	\begin{aligned}
	-\Delta&u_n = (1- \varepsilon_n)f(u_n) \quad \textit{in} \quad \Omega,\\
	&u_n- u \in H^1_0(\Omega),\\
	&0 \leq u_n \leq u \qquad\text{a.e. on $\Omega$},
	\end{aligned}
	\right.
	\end{equation}
	and
	\begin{equation}\label{cv:un}
	u_n \longrightarrow u \quad \text{in} \quad H^1(\Omega), \qquad 
	\qquad u_n \longrightarrow u  \quad {a.e.} \,\, \text{on} \,\, \Omega.
	\end{equation}
	
	\medskip
	
	Also, if we assume in addition that $\Omega$ is of class $C^1,$ $T$ is a $C^{2,\alpha}$ open portion of $\partial \Omega$ and $u_{\vert_T} = 0$ (in the sense of the traces), then $u_n \in C^{2,\alpha}(\Omega')$ for any domain $ \Omega' \subset \subset \Omega \cup T$ (sufficiently small) and $u_n = 0$ in $T'$.

	\medskip
	
	\item 2)  	Assume in addition that $u \in H^1_0(\Omega)$.  Then, the sequence $(u_n)$ can be chosen in $H^1_0(\Omega) \cap C^2(\Omega)$. If we assume in addition that $\Omega$ is of class $C^1,$ $T$ is a $C^{2,\alpha}$ open portion of $\partial \Omega$ and $u_{\vert_T} = 0$ (in the sense of the traces), then $u_n \in C^2_0(\Omega')$ for any domain $ \Omega' \subset \subset \Omega \cup T$ (sufficiently small). In particular, if  $\Omega$ is of class $C^{2,\alpha}$, then $u_n \in C^2_0(\overline {\Omega}).$   
\end{proposition}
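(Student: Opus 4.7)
The strategy is to mimic closely the proof of Theorem \ref{Prop-approx-crucial}, but with the truncated Lipschitz approximation \eqref{def-approxf} replaced by the perturbation $f_n(t) := (1-\varepsilon_n)f(t)$, for $(\varepsilon_n) \subset (0,1)$ decreasing to $0$ (say $\varepsilon_n = 1/n$). The added hypothesis that $f$ is nondecreasing is exactly what makes this cleaner family work in place of the truncations.

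For each $n$, I would check that $u$ is a weak supersolution to $-\Delta v = f_n(v)$ with trace $u|_{\partial\Omega}$: since $f$ is nondecreasing with $u \geq 0$ and (implicitly) $f(0) \geq 0$, one has $f(u) \geq 0$ a.e., so $-\Delta u = f(u) \geq (1-\varepsilon_n)f(u)$; and $0$ is a weak subsolution. The method of sub- and supersolutions in $H^1$ then produces a minimal weak solution $u_n$ with $0 \leq u_n \leq u$ and $u_n - u \in H^1_0(\Omega)$; the monotonicity $u_n \leq u_{n+1}$ follows from $f_n \leq f_{n+1}$ together with minimality. Stability of $u_n$ is inherited from that of $u$ exactly as in the proof of Theorem \ref{Prop-approx-crucial}: since $f$ is convex, $f'$ is nondecreasing, so $u_n \leq u$ yields $f_n'(u_n) = (1-\varepsilon_n)f'(u_n) \leq f'(u)$, and the stability inequality for $u$ then closes the estimate. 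The convergence \eqref{cv:un} follows verbatim from the two-step energy argument used in the proof of Theorem \ref{Prop-approx-crucial}: first test against $u - u_n \in H^1_0(\Omega)$ for a uniform $H^1$ bound; then identify the weak limit $v$ as a solution to $-\Delta v = f(v)$ with $0 \leq v \leq u$ and $v - u \in H^1_0(\Omega)$, and invoke Theorem \ref{theorem:compa-stable-solutions} to conclude $v = u$; and finally test against $v - u_n$ to upgrade to strong $H^1$ convergence.

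The main obstacle is the interior $C^2$ regularity of $u_n$, since $f$ itself is not globally Lipschitz (unlike the $f_k$ from \eqref{def-approxf}). I would resolve this by a secondary truncation: for each fixed $n$, apply the scheme \eqref{def-approxf} to $f_n$, producing Lipschitz nonlinearities $f_n^{(k)}$ and corresponding smooth stable solutions $u_n^{(k)} \leq u$ by the argument in the proof of Theorem \ref{Prop-approx-crucial}. The strict inequality $f_n < f$ on $\{f' > 0\}$ (due to $\varepsilon_n > 0$), combined with Theorem \ref{theorem:compa-stable-solutions} applied with nonlinearity $f_n$ to the pair consisting of $u$ and the limit of the $u_n^{(k)}$, rules out the "linear alternative" and forces $u_n^{(k)} = u_n$ for $k$ large enough, so that $u_n$ inherits the $C^2$ regularity of the $u_n^{(k)}$.

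For Part 2, the inclusion $u_n \in H^1_0(\Omega)$ is immediate from $u \in H^1_0(\Omega)$ and $u_n - u \in H^1_0(\Omega)$. The regularity statements near a $C^{2,\alpha}$ portion $T$ of $\partial\Omega$ on which $u|_T = 0$ follow by combining this interior construction with standard Schauder boundary estimates: the sandwich $0 \leq u_n \leq u$ together with the vanishing trace of $u$ on $T$ controls $u_n$ in a tubular neighborhood of $T$, so Schauder estimates can be propagated up to $T$ and give $u_n \in C^{2,\alpha}$ near any $T' \subset\subset T$ with $u_n = 0$ on $T'$; the global $C^2_0(\overline{\Omega})$ statement when $\Omega$ is $C^{2,\alpha}$ then follows by a finite covering of $\partial\Omega$.
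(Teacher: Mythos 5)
Your skeleton (perturb to $(1-\varepsilon_n)f$, run sub/supersolutions between $0$ and $u$, inherit stability from the convexity of $f$, identify the $H^1$-limit via Theorem \ref{theorem:compa-stable-solutions}) follows the right general philosophy, but the two technical pillars of the paper's proof are missing and your substitutes for them do not work. The paper's argument (after Brezis--Cazenave--Martel--Ramiandrisoa) hinges on the concave reparametrization $\Phi_\epsilon$ defined by $\int_0^{\Phi_\epsilon(t)}\frac{ds}{f(s)}=(1-\epsilon)\int_0^t \frac{ds}{f(s)}$, which satisfies $0\le\Phi_\epsilon(t)\le t$, turns $u$ into a supersolution $\Phi_\epsilon(u)$ of the $(1-\epsilon)$-problem, and --- crucially --- obeys the linear growth bound $f(\Phi_\epsilon(t))\le C(f)\epsilon^{-1}(1+t)$. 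This bound is what makes the monotone iteration well-defined in $H^1$ (it converts $u\in L^2(\Omega)$ into $f(\Phi_\epsilon(u))\in L^2(\Omega)$) and, iterated $\big[\frac N4\big]+1$ times through elliptic regularity and Sobolev embedding, is what yields local boundedness and hence interior $C^2$ regularity of the approximations. Your first gap is already at the existence step: with the supersolution $u$ itself and $f$ not globally Lipschitz, the ``standard'' monotone iteration is not obviously well-defined, since one needs $f(w)\in H^{-1}(\Omega)$ for every iterate $0\le w\le u$, and $f(u)\in L^1_{loc}(\Omega)$ gives no such control; the paper explicitly restricts the standard method to globally Lipschitz nonlinearities and starts the iteration from $\Phi_\epsilon(u)$ rather than $u$ precisely for this reason.

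The more serious gap is the interior regularity of $u_n$. Your ``secondary truncation'' does not close. First, Theorem \ref{theorem:compa-stable-solutions} with nonlinearity $f_n$ cannot be applied to the pair consisting of $u$ and $\lim_k u_n^{(k)}$, because $u$ solves the equation with nonlinearity $f$, not $f_n$. Second, even applied to the correct pair $\big(u_n,\lim_k u_n^{(k)}\big)$, ruling out the affine alternative only yields $u_n^{(k)}\to u_n$ in $H^1(\Omega)$ and a.e.; it does \emph{not} force $u_n^{(k)}=u_n$ for $k$ large (that would require knowing beforehand that $u_n$ is bounded, which is what you are trying to prove), and an $H^1$-limit of $C^2$ functions need not be $C^2$. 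Note also that the proposition is asserted in every dimension $N\ge 2$, so you cannot fall back on the a priori estimate of Theorem \ref{th:cfrs} either. The local boundedness really comes from the quantitative bound $f(\Phi_\epsilon(t))\le C\epsilon^{-1}(1+t)$ and the finite bootstrap $L^2\to H^2_{loc}\to L^p_{loc}\to\cdots\to L^\infty_{loc}$; this device is the heart of the paper's proof and is absent from yours, so the argument as written does not establish $u_n\in C^2(\Omega)$, and consequently the boundary regularity claims in both parts remain unproved as well.
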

	
\begin{proof}	
	
i) We argue as in \cite {BCMR} and in subsection 3.2.2 of \cite{dup}. Nevertheless, our approach requires several non standard modifications due to the fact that we work in $\mathcal{D}'$ and that $u$ is merely in $H^1(B)$ (i.e., $u$ does not have "zero boundary value"). 
 
Given $\epsilon\in (0,1)$, define $\Phi_\eps:[0,+\infty)\to[0,+\infty)$ by 
\begin{equation*}
\int_0^{\Phi_\eps(t)} \frac{ds}{f(s)} = (1-\epsilon)\int_0^t \frac{ds}{f(s)}
\end{equation*}

Since $\Phi_\eps$ solves the initial value problem 
\begin{equation}
\begin{cases}
\Phi_\eps'(t)f(t) = (1-\eps)f(\Phi_\eps(t)) , \qquad t>0\\
\Phi\eps(0)=0,
\end{cases}
\end{equation}
we see that $\Phi_\eps\in C^2([0,+\infty))$ is increasing, concave and satisfies $ 0 < \Phi_\eps' (t)< 1 $ , $ 0 \leq \Phi_\eps(t) \leq t$ for all $t \geq 0$, $ \Phi_\eps'(0) = 1-\eps \in (0,1)$, $ \Phi_\eps''(0) = - \eps(1-\eps) \frac{f'(0)}{f(0)} \leq 0$ 
Also, using the concavity on $[0,+\infty)$ of the function $h(t) :=  \int_0^t \frac{ds}{f(s)}$ we get that 

\begin{equation}\label{controllo-concavo}
0 \leq f(\Phi_\eps(t)) \leq \frac{C(f)}{\eps} (1+t) \qquad \forall \, t \geq 0, 
\end{equation}
where $C=C(f)>0$ is a constant depending only on $f$.

Since $u \in H^1(\Omega)$ and \eqref{controllo-concavo} is in force, we have $U_\eps=\Phi_\eps(u)\in H^1(\Omega)$ and so $U_\eps$ is a weak supersolution 
\footnote {That is it satisfies  $\int_\Omega \nabla U_\eps \nabla \varphi \geq  \int_\Omega (1-\epsilon) f(U_\eps) \varphi$, for all $ \varphi \in H^1_0(\Omega)$, $ \varphi \geq 0$.  Indeed, in view of the above properties of $ \Phi_\eps$ we can extend it to a $C^2$ function on the entire real line $\R$ (still denoted by $\Phi_\eps$) such that $\Phi_\eps$ is nondecreasing and concave, $\Phi_\eps'$ is nonnegative and bounded on $\R$. Then we can apply a variant of Kato's inequality (see e.g. Lemma 3.2.1 in \cite{dup}) to get that $U_\eps$ is a supersolution in $\mathcal D'(\Omega)$. A standard density argument and Fatou's Lemma then give the desired conclusion.} to 
\begin{equation}\label{approx}
\left\{
\begin{aligned}
-\Delta &u_{\epsilon,1}=(1-\epsilon)f(u_{\epsilon,1})&\quad\text{in $\Omega$,}\\
&u_{\epsilon,1} - \Phi_\eps(u) \in H^1_0(\Omega),
\end{aligned}
\right.
\end{equation}
while $v=0$ is a weak subsolution to \eqref{approx}. In addition, we have $0 < U_\eps$ a.e. on $\Omega$, since $f>0$.  
Therefore by the (standard) method of sub and supersolution in $H^1$
we obtain a stable weak solution\footnote{This solution is obtained by using the standard method of monotone iterations in $H^1$ applied to the sequence $(v_k)_{k\geq1}$ defined by $- \Delta v_{k+1} = (1-\eps) f(v_k) $ in $\Omega$, $v_{k+1} \in \{ \, v \in H^1(\Omega)) \, : \, v- \Phi_\eps(u) \in H^1_0 \, \} := H^1_{ \Phi_\eps(u)}$ and starting with $v_1 =  \Phi_\eps(u)\in H^1_{ \Phi_\eps(u)}$, the supersolution. Note that the sequence is well-defined in $ H^1_{ \Phi_\eps(u)}$ and satisfies  $0 \leq v_{k+1} \leq v_k \leq  \Phi_\eps(u)$ a.e. on $\Omega$ thanks to $f' \geq 0$ and since $f( \Phi_\eps(u)) \in L^2(B)$ by \eqref{controllo-concavo}. Furthermore,  the stabilty of $u_{\eps,1}$ comes from the stabilty of $u$ and the fact that $f'$ is positive and nondecreasing. Indeed, $ \forall \varphi \in C^1_c(\Omega)$ we have $ \int_\Omega \vert \nabla \varphi \vert^2 \geq \int_\Omega f'(u) \varphi^2 \geq \int_\Omega (f'(U_\eps) \varphi^2 \geq \int_\Omega (f'(u_{\eps,1}) \varphi^2  \geq \int_\Omega ((1-\eps)f'(u_{\eps,1}) \varphi^2 $.} $u_{\epsilon,1} \in H^1(\Omega)$ of \eqref{approx} such that $0< u_{\epsilon,1} \leq U_\eps$ a.e. on $\Omega$. Furthermore, from \eqref{controllo-concavo} we get
\begin{equation} \label{improved-int}
0 \leq f(u_{\eps,1}) \leq  f(U_\eps) \leq  \frac{C(f)}{\eps} (1 +u) 
\end{equation}
and so $ f(u_{\eps,1}) \in L^2(\Omega)$. The latter implies $u_{\eps,1} \in H^2_{loc}(\Omega)$ by elliptic regularity, hence 
\begin{equation}\label{improve-reg}
u_{\eps,1} \in L^p_{loc}(\Omega) \qquad \forall \, p < \frac{2N}{N-4} \quad (p \leq \infty \quad {\text {if}} \,\, N \leq 3, \quad p < \infty \quad {\text {if}} \,\, N=4)
\end{equation}
by Sobolev imbedding. 

In what follows, for any integer $j \geq 0,$ we shell denote by $ \Phi_\eps^j $ the composition of $ \Phi_\eps$ with itself $j$ times ($\Phi_\eps^0 = Id$.)

Now we can repeat the same construction to  find a stable weak solution $u_{\eps,2} \in H^1(\Omega)$ to
\begin{equation}\label{approx2}
\left\{
\begin{aligned}
-\Delta &u_{\epsilon,2}=(1-\epsilon)^2 f(u_{\epsilon,2}) \quad\text{in $\Omega$,}\\
&u_{\epsilon,2} - \Phi_\eps^2(u) \in H^1_0(\Omega),
\end{aligned}
\right.
\end{equation}
such that $ 0< u_{\eps,2} \leq \Phi_\eps (u_{\eps,1}) \leq u_{\eps,1} \leq u$  a.e. on $\Omega$. Here we have used $ \Phi_\eps (u_{\eps,1}) \in H^1(\Omega)$ as supersolution and again $v=0$ as subsolution. Also note that $0 \leq \Phi_\eps^2(u) \leq \Phi_\eps (u_{\eps,1})$ on $ \partial \Omega$ in the sense of $H^1(\Omega)$, since $[\Phi_\eps^2(u) - \Phi_\eps (u_{\eps,1})]^+ \in H^1_0(\Omega).$ 
 In particular, by \eqref{controllo-concavo}, $0 \leq f(u_{\eps,2}) \leq  \frac{C(f)}{\eps} (1 +u_{\eps,1}) \in L^p_{loc}(\Omega)$ for any $p$ in the range \eqref{improve-reg}, and thus $ u_{\eps,2} \in L^q_{loc}(\Omega)$ for all $q < \frac{2N}{N-8}$ ($q \leq \infty$ if $ N \leq 7, \, q < \infty$ if $ N=8$). 
 Also note that $ 0< u_{\eps,2} \leq \Phi_\eps (u_{\eps,1}) \leq \Phi_\eps (\Phi_\eps(u)) = \Phi_\eps^2(u) \leq  \Phi_\eps(u)$ a.e. on $\Omega$.
 
By iteration, we find that if $ k= \Big[\frac{N}{4}\Big]+1$, the  solution $u_{\eps,k} \in H^1(\Omega)$ to
\begin{equation}\label{approxk}
\left\{
\begin{aligned}
-\Delta&u_{\epsilon,k}=(1-\epsilon)^k f(u_{\epsilon,k})\quad\text{in $\Omega$,}\\
&u_{\epsilon,k} - \Phi_\eps^k(u) \in H^1_0(\Omega),
\end{aligned}
\right.
\end{equation}
is locally bounded (hence of class $C^2$ inside $\Omega$) and also satisfies $ 0< u_{\eps,k} \leq \Phi_\eps (u_{\eps,k-1})\leq \Phi_\eps^k(u) \leq u$ a.e. on $\Omega$.

Since $ \eps \in (0,1)$ is arbitrary we have proved that, for every $ \delta \in (0,1)$ (choose $\delta = 1 - (1-\eps)^k$) there exists a nonnegative stable weak solution $u_\delta \in H^1(\Omega) \cap C^2(\Omega)$ to
\begin{equation}\label{approx-delta}
\left\{
\begin{aligned}
-\Delta&u_\delta=(1-\delta)f(u_\delta)\quad\text{in $\Omega$,}\\
&u_{\delta} - \Phi_\delta^k(u) \in H^1_0(\Omega).
\end{aligned}
\right.
\end{equation}

Since $ 0 \leq u_\delta \leq  \Phi_\delta^k(u) \leq u$ a.e. on $\Omega$ by construction, we get $ \Vert u_\delta \Vert_{L^2(\Omega)} \leq \Vert \Phi_\delta^k(u)\Vert_{L^2(\Omega)} \leq  \Vert u \Vert_{L^2(\Omega)}$ and also that $ \Phi_\delta^k(u) \longrightarrow u$ in $L^2(\Omega)$ by the dominated convergence theorem (recall that $\Phi_\delta(t) \longrightarrow t$ for all $t \geq 0$ and that $\Phi_\eps$ is a contraction on $\R^+$).  Moreover, by choosing $ \Phi_\delta^k(u) - u_\delta \in H^1_0(\Omega)$ as test function in the weak formulation of \eqref{approx-delta} we obtain 

\[
\int_\Omega \nabla u_\delta \nabla ( \Phi_\delta^k(u) - u_\delta) = \int_\Omega (1-\delta) f(u_\delta) ( \Phi_\delta^k(u) - u_\delta)  \geq 0
\]
since $  \Phi_\delta^k(u) - u_\delta \geq 0$ a.e. on $\Omega$ and $ f \geq 0$. Therefore we deduce that $ \int_\Omega \vert \nabla u_\delta \vert^2 \leq \int_\Omega \nabla u_\delta \nabla  \Phi_\delta^k(u) $ which leads to $ \Vert \nabla u_\delta \Vert_{L^2(B)} \leq \Vert \nabla  \Phi_\delta^k(u) \Vert_{L^2(B)} $ by Young's inequality. On the other hand $ \nabla \Phi_\delta^k(u) = \Big (\prod_{j=0}^{k-1} \Phi_\delta' (\Phi_\delta^{(j)}(u)) \Big) \nabla u$, which entails 
$ \Vert \nabla u_\delta \Vert_{L^2(\Omega)} \leq \Vert \nabla \Phi_\delta^k(u) \Vert_{L^2(\Omega)} \leq \Vert \nabla u \Vert_{L^2(\Omega)}$. Therefore 
\begin{equation}\label{stima-energia}
\Vert u_\delta \Vert_{H^1(\Omega)} \leq \Vert \Phi_\delta^k(u) \Vert_{H^1(\Omega)} \leq  \Vert u \Vert_{H^1(\Omega)}.
\end{equation}
In particular the families $(u_\delta) $ and $( \Phi_\delta^k(u))$ are bounded in $ H^1(\Omega)$ and therefore, we may and do suppose that (up to subsequences) $u_\delta \rightharpoonup v$  in $H^1(\Omega)$, $u_\delta \longrightarrow v $ in $L^2(\Omega)$, $ u_\delta \longrightarrow v$ a.e. on $\Omega$ and $ \Phi_\delta^k(u)
\rightharpoonup V$  in $H^1(\Omega)$, $ \Phi_\delta^k(u) \longrightarrow V $ in $L^2(\Omega)$, $  \Phi_\delta^k(u) \longrightarrow V$ a.e. on $\Omega$, for some $v, V \in H^1(\Omega)$, as $\delta \to 0.$  From those properties we get $V=u$ (recall that $ \Phi_\delta^k(u) \longrightarrow u$ in $L^2(\Omega)$) and also that $v$ is a solution of $ - \Delta v = f(v) $ in $\Omega$. Also $v$ is stable thanks to $ u_\delta \longrightarrow v$ a.e. on $\Omega$, the positivity and the continuity of $f'$ and Fatou's Lemma. 
On the other hand, the weak convergence of $(u_\delta)$ and $(\Phi_\delta^k(u))$ in $H^1(\Omega)$ and \eqref{approx-delta} imply ${u_\delta} - \Phi_\delta^k(u) \rightharpoonup  v-u$ in $ H^1_0(\Omega)$. 
Finally  we get that $u_\delta \longrightarrow u$ in $H^1(\Omega)$, since $u_\delta \rightharpoonup u $ in $H^1(\Omega)$ and $\limsup  \Vert u_\delta \Vert_{H^1(\Omega)} \leq \Vert u \Vert_{H^1(\Omega)}$ by \eqref{stima-energia}. 
Since both $u$ and $v$ are stable solutions in $H^1(\Omega)$ and $ 0 \leq v \leq u$ a.e. on $\Omega$, we deduce from Theorem \ref{theorem:compa-stable-solutions} that either $u=v$ in $\Omega$ or $ u>0$ and $u \in C^\infty(\Omega)$. In the first case the desired conclusion follows by taking $ \epsilon_n = \delta_n$, where $ (\delta_n)$ is any sequence in $(0,1)$ such that $ \delta_n \searrow 0$ and $ u_n = u_{\delta_n},$ while in the second one it is enough to take $ \eps_n = 0$, $ u_n = u$ for every $ n \geq 1$.  

The last claim of item i) then follows by standard elliptic theory. 

\medskip

ii) If $f(0)>0,$ the conclusion follows from item i).  If $f(0)=0$, either $u\equiv 0$ or $u$ is a positive first eigenfunction of $ -\Delta$ with homogeneous Dirichlet boundary conditions, by Theorem \ref{th:stable:f(0)=0}. 
In both cases the smoothness of $u$ up to (a portion of the) boundary follows from the elliptic regularity, since $\partial \Omega$ is smooth enough. To conclude it is enough to take $ \eps_n = 0$, $ u_n = u$ for every $ n \geq 1$. 

\hfill\qed 

\end{proof}

\end{document}